\documentclass[11pt]{amsart} 
\usepackage{amsthm,amsbsy,amsmath,amssymb,amscd,amsfonts,array,mathrsfs,verbatim,enumerate,enumitem,xypic,color}
\usepackage{geometry,graphicx}

\usepackage[all,cmtip]{xy}
\usepackage{subcaption}
\xyoption{arc} 

\usepackage[utf8]{inputenc}

\setlength{\oddsidemargin}{0.5in}        \setlength{\evensidemargin}{0.0cm}
\setlength{\marginparsep}{2mm}          \setlength{\marginparwidth}{1.2cm}
\setlength{\textwidth}{15cm}            \setlength{\topmargin}{0.0cm}
\setlength{\textheight}{22cm}           \setlength{\headheight}{.1in}
\setlength{\headsep}{.3in}              
\setlength{\parskip}{2.mm}              

\newtheorem{thm}{Theorem}[section]
\newtheorem{prop}[thm]{Proposition}     
\newtheorem{lemma}[thm]{Lemma}
\newtheorem{cor}[thm]{Corollary}

\theoremstyle{definition}

\newtheorem{defn}[thm]{Definition}

\newtheorem{remark}[thm]{Remark}

\numberwithin{equation}{section}

\newcommand{\R}{\mathbb{R}}

\newcommand{\C}{\mathbb{C}}
\newcommand{\Z}{\mathbb{Z}}
\newcommand{\cl}{\mbox{cl}}

\newcommand{\ba}{\begin{array}}
\newcommand{\ea}{\end{array}}
\newcommand{\ra}{\rightarrow}

\newcommand{\st}{\textnormal{st}}

\newcommand{\tb}{{\bf tb}}
\newcommand{\tw}{\mbox{tw}}

\newcommand{\Fix}{\mbox{Fix}}
\newcommand{\id}{\textnormal{id}}
\newcommand{\conj}{\textnormal{conj}}
\newcommand{\smx}{s_{\textnormal{max}}}

\newcommand{\sg}{\Sigma_g}
\newcommand{\cst}{c_{\st}}
\newcommand{\mL}{\mathcal{L}}
\newcommand{\s}{S^1\times S^2}
\theoremstyle{plain}

\newcommand{\be}{\begin{eqnarray*}}
\newcommand{\ee}{\end{eqnarray*}}
\newcommand{\bne}[1]{\begin{eqnarray} \label{#1} }
\newcommand{\ene}{\end{eqnarray}}

\arraycolsep=2pt

\begin{document}

\title{Every real 3-manifold is real contact}

\author{Merve Ceng\.{ı}z}
\address{}
\email{ merveseyhun@gmail.com}

\author{Fer\.{ı}t \"{O}zt\"{u}rk}
\address{Bo\u{g}az\.{ı}\c{c}\.{ı} \"{U}n\.{ı}vers\.{ı}tes\.{ı}, Department of Mathematics, \.Istanbul, Turkey}
\email{ferit.ozturk@boun.edu.tr}

\subjclass[2010]{Primary 57K33, 57M60; Secondary 57M50, 53D35}

\begin{abstract}
A real 3-manifold is a smooth 3-manifold together with an orientation preserving smooth involution, which is called a real structure. A real contact 3-manifold is a real 3-manifold with a contact distribution that is antisymmetric with respect to the real structure. We show that  every real 3-manifold can be obtained via  surgery  along invariant knots starting from the standard real $S^3$ and  that this operation can be performed in the contact setting too. Using this result we prove that any real 3-manifold admits a real contact structure. As a corollary we show that any oriented overtwisted contact structure 
on an integer homology real 3-sphere can be isotoped to be real. 
Finally we give construction examples on $S^1\times S^2$ and lens spaces.
For instance on every lens space there exists a unique real structure that acts on each Heegaard torus as hyperellipic involution. We show that any tight contact structure on any lens space is real with respect to that real structure.
\end{abstract}

\maketitle

\section{Introduction}

A {\em real structure} on a smooth oriented $2k$- (respectively $(2k-1)$-) dimensional manifold 
is a smooth involution which is orientation preserving if $k$ is even and 
orientation reversing if $k$ is odd, with its fixed point set having dimension $k$ (respectively $k-1$), if not empty. 
The idea behind this definition is to mimic the complex conjugation on a complex analytic variety given by  functions with real coefficients. For $c_{M}$ a real structure on $M$, we call the pair $(M,c_M)$ a {\em real manifold} and the fixed point set of  $c_M$ the {\em real part}, denoted below by $\Fix~(c_M)$.

In this work we will heavily be interested in the 3-dimensional case:  a real structure on a smooth, closed, oriented 3-manifold is an orientation preserving smooth involution with the fixed point set being either empty or 1-dimensional.  The standard example for a real 3-manifold is the 3-sphere $S^3\subset\C^2$ with the standard real structure $\cst=\conj|_{S^3}$ where $\conj$ denotes the complex conjugation on $\C^2$. This involution is known to be the unique real structure with nonempty real part on $S^3$ up to equivariant isotopy. This fact is a result of the culminated work on masse on the resolution of the Smith conjecture that states originally that a finite-order diffeomorphism of $S^3$ cannot have a knotted 1-dimensional fixed point set. See \cite{Wa} for a PL topological solution of that conjecture for even periods  and \cite{MoBa} for a detailed exposition of the generalizations and the related work.

Now let $\xi$ be an oriented contact structure on a smooth, compact, oriented real $(M,c_M)$. If $(c_M)_*(\xi)=-\xi$,  then $\xi$ is said to be $c_M$-real and the triple $(M,c_M,\xi)$ is called a $c_M$-real  contact  manifold (see \cite{OzSa1}, \cite{OzSa2} for definitions and discussion). The obvious example is  $S^3\subset\C^2$ with the real structure $\cst$ and the unique tight contact structure $\xi_{\st}$ on $S^3$. More generally real contact 3-manifolds appear naturally as link manifolds of isolated complex analytic singularities defined by analytic functions with real coefficients: the natural tight contact structure induced on the link manifold by the complex tangencies is real with respect to the real structure determined by the complex conjugation. Even more generally, there are various basic conditions for a hypersurface in a real symplectic manifold (i.e. a real smooth manifold with an antisymmetric symplectic form) which makes it naturally a real contact manifold (see e.g. \cite{Ev}, notably 
for the observation there in Proposition~1.2.4, and for more examples.)

The positive contact structures on a closed, oriented 3-manifold are associated with  
the open book decompositions on the manifold via the Giroux correspondence; indeed open books and contact structures  are in one-to-one correspondence up to positive stabilizations and contact isotopy respectively \cite{Gi}.  Similarly on a real 3-manifold one can introduce the notion of a real open book decomposition. In \cite{OzSa2}  we have taken several first steps  towards a Giroux correspondence between  the real open books and the real contact structures.
Namely it has been proven there that every real open book supports a real contact structure, that every real contact structure is supported by a real open book, and that two real contact structures supported by the same real open book are equivariantly isotopic (see \cite{OzSa2} for definitions and the exact statements).

Nevertheless it remained an open question whether real open books and real contact structures always exist on a given  real 3-manifold, for example whether every real 3-manifold has a contact structure  at all that is real with respect to the given real structure. This last question, which 
was raised in  \cite{OzSa2} and \cite{Ev}, can be considered as the real version of J.~Martinet's result on the existence of contact structures on closed 3-manifolds \cite{Ma}. The main purpose of the present work is to answer that question affirmatively:

\begin{thm}
Every real 3-manifold admits a real contact structure.
\label{emartinet}
\end{thm}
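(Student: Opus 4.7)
The plan is to deduce the theorem as a straightforward consequence of the two structural results announced in the abstract: an equivariant surgery description of real 3-manifolds, and the lifting of this surgery to the contact category. The base object is the standard real contact sphere $(S^3,\cst,\xi_{\st})$, where $\xi_{\st}$ is well known to be real with respect to $\cst$ since complex conjugation on $S^3\subset\C^2$ reverses the complex tangencies. Everything then reduces to performing equivariant contact surgeries that keep both the real and the contact structures compatible.

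First I would invoke the equivariant surgery theorem: any real 3-manifold $(M,c_M)$ is obtained from $(S^3,\cst)$ by Dehn surgery along a link $L\subset S^3$ such that $\cst(L)=L$, with surgery coefficients respected by $\cst$. The components of $L$ split into two types, those fixed setwise by $\cst$ (invariant knots) and pairs of components swapped by $\cst$. Next I would upgrade $L$ to a real Legendrian link: each invariant component can be $\cst$-equivariantly $C^0$-perturbed to a Legendrian knot that is $\cst$-invariant in the orientation-reversing contact sense, and each swapped pair can be realized as a pair of Legendrian knots interchanged by $\cst$. This is the real analogue of Legendrian realization and should follow from the standard front-projection argument applied equivariantly, together with the tools developed for real open books in \cite{OzSa2}.

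With $L$ made real Legendrian, I would then perform contact $(\pm 1)$-surgery along the components, adjusting framings by equivariant Legendrian stabilizations when necessary to match the topological surgery coefficients. Because stabilizations and contact surgeries can be carried out in $\cst$-invariant tubular neighborhoods (by using the $\cst$-invariant model contact structure on a neighborhood of a real Legendrian knot), the resulting contact structure on the surgered manifold is invariant in the orientation-reversing sense under the induced involution. The induced involution on the surgered manifold is precisely $c_M$, by the first surgery theorem applied equivariantly. Hence $(M,c_M)$ carries a real contact structure, proving Theorem \ref{emartinet}.

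The main obstacle I anticipate is the equivariant Legendrian realization step together with the framing bookkeeping: one must guarantee that invariant knots admit $\cst$-equivariant Legendrian approximations whose contact framings can be adjusted equivariantly to any prescribed topological framing, and that the local contact surgery model near a real Legendrian knot is itself real. Once these local equivariant models are set up and combined with the equivariant surgery presentation, the global assembly is essentially formal and the theorem follows.
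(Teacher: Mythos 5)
Your overall architecture coincides with the paper's: start from $(S^3,\cst,\xi_{\st})$, apply an equivariant Lickorish--Wallace theorem to obtain an equivariant surgery link, make it equivariantly Legendrian, and perform equivariant contact surgery. However, the two steps you describe as routine are precisely where the argument can fail, and you have not addressed the obstructions. First, equivariant Legendrian realization of an invariant knot is \emph{not} always possible: the paper proves only a restricted version (Lemma~\ref{lerp}), which requires that the complement of the curve system, the dividing set and $\Fix~(c)$ on the convex surface decompose into pairs of regions swapped by the involution, and Remark~\ref{yok} exhibits a concrete invariant unknot on a convex Heegaard torus in the tight $S^3$ that cannot be made equivariantly Legendrian (doing so would force $\tb=1$ for an unknot, which is impossible). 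Second, ``adjusting framings by equivariant Legendrian stabilizations'' does not work in general: by Theorem~\ref{realst} the equivariant tight solid tori needed to fill in after surgery exist only for restricted slopes depending on the type of the invariant knot --- none at all for $c_3$-knots, and only slopes of a fixed parity for $c_3$- versus $c_4$-gluings --- so the contact twisting of an invariant Legendrian representative cannot be adjusted to an arbitrary value while preserving equivariance (a symmetric stabilization generically changes $\tw$ by $2$, so the relevant parity is an invariant of the situation). Hence for an arbitrarily chosen invariant component of the surgery link the required equivariant contact $(\pm1)$-surgery may simply not exist.

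The paper resolves both issues by a careful choice of input data rather than by a general realization-and-stabilization argument: the real Heegaard splitting of $(S^3,\cst)$ is taken to come from an explicit real open book (Lemma~\ref{convex}, and Figure~\ref{separatingob} in the separating case), which guarantees that the invariant components of the surgery link are $c_4$-knots (or a single $c_2$-knot when the real part is empty) meeting the dividing set in exactly four points; these satisfy the hypothesis of Lemma~\ref{lerp} and acquire $\tw=-2$ after equivariant realization, and Table~\ref{tab:extensions} then shows that the required equivariant contact surgery exists for exactly these types and twistings. The swapped pairs $L\cup\overline{L}$ are indeed unproblematic, as you say. But your proposal needs an argument controlling the type of the invariant knots and their twisting relative to the Heegaard surface before the ``global assembly'' can be called formal.
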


One of the standard ways to prove Martinet's theorem is to recall that every closed 
3-manifold can be obtained via a surgery on a link in $S^3$ and then to argue that this surgery can be performed in the contact setting. A usual expression of the former fact is through the  well-known Lickorish-Wallace theorem, which states that every closed orientable 3-manifold may be obtained by a surgery along a link  in $S^3$ where each Dehn surgery coefficient is an integer  (see e.g. \cite{Ro}). 

In order to follow this track in the equivariant contact setting,  we first define and investigate in Section~\ref{EQS} the notion of Dehn surgery in the equivariant and contact equivariant setup in real contact 3-manifolds. Among others we detect
explicitly when equivariant contact $(1/l)$-surgery ($l\in\Z$) along equivariant Legendrian knots is possible (Theorem~\ref{contactsurgery}). One of the direct corollaries of that discussion is the following which is proven in Section~\ref{EQS}.

\begin{prop}
Any overtwisted contact structure on $S^3$ can be isotoped to be $\cst$-real. More precisely any overtwisted contact structure on $S^3$ can be obtained by an equivariant contact surgery in $(S^3,\cst,\xi_{\st})$.
\label{OTinS3}
\end{prop}

Section~\ref{S3R3M}, which is not directly related to the proof of Theorem~\ref{emartinet} and may be skipped in the first reading, 
suggests a method to obtain a given real 3-manifold from the standard real $S^3$ through a sequence of single surgery operations and intermediate real  3-manifolds,
in a way that each next surgery is equivariant in the previous 
intermediate real 3-manifold. We call such a link --constituted of an ordered collection of knots-- {\em recursively invariant}.
Thus  we prove the following theorem  in Section~\ref{S3R3M}, which can be considered as a recursively equivariant version of the Lickorish-Wallace theorem. (See Theorem~\ref{arda} for the detailed, precise version.)
\begin{thm} 
Every closed real 3-manifold can be obtained via a finite number of Dehn surgeries  along an ordered, recursively invariant collection of  knots starting from the real 3-sphere $(S^3,\cst)$.
\label{interm}
\end{thm}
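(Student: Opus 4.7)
The approach is to establish an equivariant analog of the classical Lickorish--Wallace theorem. The first step is to produce an equivariant Heegaard splitting $M = H_1 \cup_\Sigma H_2$ of $(M,c_M)$ in which the Heegaard surface $\Sigma$ is $c_M$-invariant. Such a splitting exists because $c_M$ admits an equivariant triangulation (Illman), so the regular neighborhood of its invariant $1$-skeleton and the complementary regular neighborhood yield two handlebodies exchanged or preserved by $c_M$. The restricted involution $c_\Sigma$ is then one of the finitely many standard involutions of a closed surface of a given genus.

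The next step is to compare this splitting with the standard genus-$g$ equivariant Heegaard splitting of $(S^3, \cst)$ built from the obvious invariant handlebodies. The difference of the two gluing maps is an element of the equivariant mapping class group of $(\Sigma, c_\Sigma)$. Using an equivariant Lickorish-type generating theorem for this group, the element factors as a finite product of Dehn twists along simple closed curves on $\Sigma$ that are either individually $c_\Sigma$-invariant or come in pairs swapped by $c_\Sigma$. Each such Dehn twist translates, via the standard Heegaard-to-surgery dictionary, into a Dehn surgery along a knot in the ambient 3-manifold, with the invariance of the curve reflected in the invariance of the resulting knot.

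The main task is then to arrange that every surgery is performed along an \emph{individually} invariant knot, in an order under which the knot is invariant with respect to the real structure currently induced on the manifold obtained after the preceding surgeries. Twists along individually invariant curves give, directly, surgeries along individually invariant knots, and induce an extension of the real structure by the setup of Section~\ref{EQS}. For a pair of curves $\{\gamma, c_\Sigma(\gamma)\}$ swapped by the involution, one gets a pair of knots $\{K, c_M(K)\}$ that is collectively but not individually invariant; I would replace such a pair by a single invariant knot obtained via an equivariant band sum along a $c_M$-invariant arc, with a correspondingly adjusted framing, and verify the replacement by an equivariant Kirby-calculus argument built on Section~\ref{EQS}. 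Iterating, each new surgery is performed with respect to the induced real structure on the previously surgered manifold, producing the required recursively invariant ordered collection.

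The hardest part will be the reduction of swapped pairs to individually invariant knots, which requires a careful equivariant Kirby calculus that simultaneously tracks framings, surgery coefficients, and the induced real structure at every intermediate stage. A subsidiary obstacle is to ensure that the equivariant Heegaard splitting of $(M, c_M)$ can actually be increased in genus (by equivariant stabilization) to match and factor against the standard one, so that the mapping class comparison above is justified for every real structure that can occur on $M$. The machinery developed in Section~\ref{EQS}, together with Illman's equivariant triangulation theorem and the classification of involutions on surfaces, should make each of these steps tractable.
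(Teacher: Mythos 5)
Your coarse strategy (compare the two Heegaard gluing involutions, factor their difference into Dehn twists compatible with the symmetry, convert twists into surgeries) is the paper's, but the step that carries all the content is absent, and the substitute you offer would fail. The theorem demands that \emph{every} knot be individually invariant under the real structure induced after the preceding surgeries; your plan produces swapped pairs $\{\gamma,c_\Sigma(\gamma)\}$ and then proposes to replace the pair $\{K,c_M(K)\}$ by a single invariant knot via an equivariant band sum ``with adjusted framing.'' That is not a legitimate move: $\tau_\gamma\tau_{c(\gamma)}$ is not isotopic to a Dehn twist along any band sum of $\gamma$ and $c(\gamma)$ (for disjoint non-isotopic curves it is a genuine multitwist), so the corresponding surgeries produce different manifolds, and no amount of equivariant Kirby calculus can repair an identity that is already false in the mapping class group. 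You have flagged this as ``the hardest part'' without supplying a mechanism, and this is exactly the point where the proof lives. A second, smaller issue: the difference $c\cdot s$ of the two gluing maps is an orientation-preserving diffeomorphism but is \emph{not} in general an element of the equivariant mapping class group of $(\Sigma,c_\Sigma)$ (it need not commute with $c_\Sigma$), so there is no off-the-shelf ``equivariant Lickorish generating theorem'' to invoke for it.

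What actually makes the theorem work (Lemmas~\ref{ifade} and~\ref{even}) is a palindrome trick that avoids swapped pairs entirely. Writing the conjugating diffeomorphism as $f=\tau_{a_1}^{\sigma_1}\cdots\tau_{a_w}^{\sigma_w}$ with each $a_j$ $\smx$-invariant, one has $\smx\,\tau_{a_j}^{-\sigma_j}\smx=\tau_{a_j}^{\sigma_j}$ because $\smx$ reverses orientation, so $f\smx f^{-1}=\tau_{a_1}^{\sigma_1}\cdots\tau_{a_w}^{\sigma_w}\tau_{a_w}^{\sigma_w}\cdots\tau_{a_1}^{\sigma_1}\cdot\smx$ is an even palindrome times $\smx$; the palindrome then regroups as $\tau_{r_w}^{2\sigma_w}\cdots\tau_{r_1}^{2\sigma_1}$ with $r_{j+1}=f_j(a_{j+1})$, and each $r_{j+1}$ is checked to be invariant under the real structure $\tau_{r_j}^{2\sigma_j}\cdots\tau_{r_1}^{2\sigma_1}\smx$ obtained after the first $j$ (pairs of) twists. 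Combined with twists along curves in the fixed-point sets, this yields a factorization of $c\cdot s$ in which every curve is individually recursively invariant, which is then converted surgery by surgery. Swapped pairs are permitted only in the weaker Theorem~\ref{elw}, whose statement does not require individually invariant knots; for the present statement your argument is missing the idea that eliminates them.
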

Due to the recursive nature of the construction, this theorem does not lend itself easily to applications, for example in obtaining surgery diagrams in $S^3$. 
Instead, in Section~\ref{ELWT} 
we prove an equivariant version of the Lickorish-Wallace theorem. 
\begin{thm}[Equivariant Lickorish-Wallace Theorem]
Every closed, oriented real 3-manifold can be obtained via equivariant Dehn surgery along an equivariant link $\mL$ in the real 3-sphere $(S^3,\cst)$.
The equivariant link can be taken as $\mL= L\cup L_S\cup \overline{L}$ where $L_S$ is a $\cst$-equivariant unlink, $\cst(L)=\overline{L}$ and all the surgery coefficients can be taken as $\pm 1$,
(with respect to a framing induced by an invariant Heegaard surface).
\label{elw}
\end{thm}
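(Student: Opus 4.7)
The plan is to combine the recursive presentation given in Theorem~\ref{arda} with an equivariant coefficient-reduction argument. Theorem~\ref{arda} yields an ordered sequence $(S^3,\cst)=M_0\to M_1\to\cdots\to M_n=(M,c_M)$ of Dehn surgeries along knots $K_i\subset M_{i-1}$ that are invariant under $c_{M_{i-1}}$, either individually invariant or appearing as a pair swapped by the involution. I want to (i) transport all the $K_i$ back to $S^3$ as components of a single equivariant link, and (ii) replace the resulting rational surgery coefficients by $\pm 1$.

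For step (i), I would proceed by induction on $i$. Because each surgery $M_{i-1}\to M_i$ is realised equivariantly (using Section~\ref{EQS}), the surgery solid torus inside $M_{i-1}$, as well as its image inside $M_i$, can be chosen to be invariant under the corresponding real structure. The next invariant knot $K_{i+1}\subset M_i$ can then be equivariantly isotoped off the dual surgery torus; in its new position it lives inside the common complement $M_{i-1}\setminus\nu(K_i)=M_i\setminus\nu(K_i')$, and so is realised as an invariant knot inside $M_{i-1}$. Iterating, one obtains in $(S^3,\cst)$ an equivariant link whose components split naturally into individually $\cst$-invariant knots and pairs swapped by $\cst$.

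For step (ii), I would apply a symmetric form of the classical Rolfsen twist: encircling a surgery component with a $\pm 1$-framed unknot changes its coefficient by $\pm 1$, so after finitely many such moves every rational framing can be replaced by a $\pm 1$ one. To preserve equivariance I perform the move symmetrically: around an individually invariant component I add a $\cst$-invariant unknot, and around a paired component $K\cup\cst(K)$ I add a pair $U\cup\cst(U)$. The collection of auxiliary unknots together with the individually invariant knots from step (i) assembles into the equivariant unlink $L_S$, while the paired knots assemble into $L\cup\overline{L}$. Measuring framings with respect to a real Heegaard surface of $(S^3,\cst)$ ensures that the coefficients remain integral throughout the reduction and matches the statement.

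The main obstacle I expect is the equivariant pushback of step (i). Even though each $K_{i+1}$ is $c_{M_i}$-invariant and the surgery torus is invariant, isotoping $K_{i+1}$ equivariantly off this torus requires a case analysis depending on how $c_{M_i}$ acts on a neighbourhood of $K_i$ — whether $K_i$ lies in $\Fix\,c_{M_i}$, is setwise invariant but disjoint from $\Fix\,c_{M_i}$, or is paired — together with a matching analysis for $K_{i+1}$. A secondary technical point is to verify that in step (ii) the added unknots can always be placed disjointly from one another (and from the fixed locus) in an equivariant fashion; this should follow because inside each invariant solid-torus neighbourhood one can always choose an invariant unknotted encircler, and these neighbourhoods can be made pairwise disjoint.
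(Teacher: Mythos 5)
Your strategy is genuinely different from the paper's, and its first step has a real gap. The claim that the invariant knot $K_{i+1}\subset M_i$ ``can be equivariantly isotoped off the dual surgery torus'' is the crux and is not justified: in the recursive presentation of Theorem~\ref{arda} the curves $K_i$ and $K_{i+1}$ both lie on the common Heegaard surface $H$ and in general intersect, so $K_{i+1}$ passes through the surgery solid torus of $K_i$ essentially and cannot be pushed off it. The classical repair (Lickorish's) is to place the successive curves on distinct parallel levels $H\times\{t_i\}$ of a collar before performing any surgery; but equivariantly this is obstructed, because $c_M$ is orientation-reversing on $H$ and orientation-preserving on $M$, hence acts on the collar by $(x,t)\mapsto(c(x),-t)$, swapping the two sides of $H$. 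Consequently the push-off of an \emph{individually} invariant curve to a level $t\neq 0$ is no longer invariant: all individually invariant surgery curves are forced onto the single level $t=0$ and must therefore be pairwise disjoint. Your construction never arranges this disjointness, and it is exactly the content of the paper's proof: one conjugates to write $c=\tau^{u_t}_{a_t}\cdots\tau^{u_1}_{a_1}\cdot\tau^{-1}_{b_k}\cdots\tau^{-1}_{b_1}\cdot\tau^{u_1}_{s(a_1)}\cdots\tau^{u_t}_{s(a_t)}\cdot s$ with the $b_j$ \emph{disjoint} invariant curves occurring consecutively in the middle of the word and everything else in $s$-swapped pairs, and Lemma~\ref{main} then realizes this word by putting the $b_j$ on the central level $h_0$ and each pair $(a_i,s(a_i))$ on the symmetric levels $h_{-i},h_i$. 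This disjointness requirement is also why the statement asks $L_S$ to be an \emph{unlink} of invariant components.

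Your step (ii) is both unnecessary in the paper's route and problematic in yours. It is unnecessary because the paper writes the conjugating map $f$ as a word in single Dehn twist generators, so every exponent is $\pm1$ and every surgery coefficient is $\pm1$ in the Heegaard framing from the outset. It is problematic because a ``symmetric Rolfsen twist'' around an individually invariant component requires an invariant encircling meridian, and a $c_3$- or $c_4$-invariant knot has none: the involution translates along the core by half a period and carries each meridian disk to a disjoint one. (For $c_1$- and $c_2$-knots invariant meridians do exist, but the case analysis is unavoidable.) If you want to salvage your outline, the essential missing ingredient is precisely the normal form for $c\cdot s$ as an equivariant product of single twists with all invariant curves disjoint; once you have that, the push-back and the coefficient reduction both evaporate.
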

The proofs of Theorems~\ref{interm}~and~\ref{elw} respectively follow the proof of the Lickorish-Wallace Theorem where we start with suitable decompositions for $S^3$ and the given 3-manifold, and look for appropriate factorizations of diffeomorphisms on  Heegaard surfaces in the mapping class group (recursively invariant factorization and equivariant factorization respectively).

Employing Theorem~\ref{elw}, we prove our main result, Theorem~\ref{emartinet}, in Section~\ref{3MRCS}.
For the proof it suffices to show that  the equivariant link in Theorem~\ref{elw} can be chosen appropriately so that it is possible to turn the equivariant surgeries into equivariant contact $(\pm1)$-surgeries. The proof is constructive and  produces an explicit algorithm that allows explicit equivariant contact surgery descriptions for real contact 3-manifolds.

An immediate consequence of Theorem~\ref{emartinet} is the following, which we prove in Section~\ref{3MRCS}.
\begin{cor}
Any oriented overtwisted contact structure on an integer homology real sphere $(\Sigma,s)$ of dimension~3 can be isotoped to be $s$-real.
\label{OTinZhomo}
\end{cor}

In Section~\ref{EXs} we produce examples on $S^1\times S^2$ and lens spaces.  In the first part we show 
\begin{thm}
The unique tight contact structure on $S^1\times S^2$ is real with respect to three of the four possible real structures on $S^1\times S^2$.
\label{3ofs1xs2}
\end{thm}
We do not know the answer for the last real structure.

Finally on any lens space there exists a unique real structure that acts on each Heegaard torus as hyperelliptic involution (coined as type~A in  \cite{HoRu}). 
We prove in Section~\ref{lenstypeA}

\begin{thm}
	For any $p>q>0$, every tight contact structure on $L(p,q)$ is $A$-real.
\label{A-real}
\end{thm}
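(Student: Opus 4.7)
The plan is to realize each tight contact structure on $L(p,q)$ as the result of equivariant Legendrian $(-1)$-surgery on a $\cst$-invariant chain of Legendrian unknots in $(S^3,\xi_{\st})$, invoke Theorem~\ref{contactsurgery} to descend the real structure through the surgery, and then identify the induced real structure with $c_A$.

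I would start from the Honda-Giroux classification of tight contact structures on lens spaces: writing $-p/q=[a_1,\ldots,a_n]$ with $a_i\le-2$, every tight contact structure on $L(p,q)$ arises from Legendrian $(-1)$-surgery on a chain $L_1,\ldots,L_n$ of Legendrian unknots in $(S^3,\xi_{\st})$ with $\tb(L_i)=a_i+1$ and $\rot(L_i)\in\{a_i+2,a_i+4,\ldots,-a_i-2\}$, and the count $\prod|a_i+1|$ of admissible tuples matches the number of tight contact structures up to isotopy. The first step would be to draw this chain $\cst$-equivariantly: place each $L_i$ as a $\cst$-invariant Legendrian on which $\cst$ restricts to an orientation-reversing involution with two fixed points lying on $\Fix(\cst)\cap L_i$. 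Since $\cst$ reverses contact-plane orientation, the orientation reversal along $L_i$ imposes no a priori constraint on $\rot(L_i)$: equivariant cusps of the front located on the image of $\Fix(\cst)$ shift $\rot(L_i)$ in steps of $\pm1$ while mirror-paired cusps off $\Fix(\cst)$ shift $\rot(L_i)$ by $0$ or $\pm 2$, and together these moves realize every Honda-admissible tuple. The inter-chain linkings between consecutive $L_i,L_{i+1}$ can likewise be drawn symmetric about $\Fix(\cst)$.

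With the chain equivariant, Theorem~\ref{contactsurgery} produces a real contact manifold $(L(p,q),c,\xi)$ with $c_*\xi=-\xi$ and $\xi$ the prescribed Honda contact structure. To identify $c$ with $c_A$, arrange the chain inside a $\cst$-invariant Heegaard solid torus $V_1\subset S^3$; the complementary solid torus $V_2$ is then also $\cst$-invariant, and $\cst|_{\partial V_2}$ is the hyperelliptic involution of the Heegaard torus, with four fixed points $\partial V_2\cap\Fix(\cst)$. After the equivariant surgery, the Heegaard splitting of $L(p,q)$ obtained from $V_2$ and the surgered $V_1'$ remains $c$-invariant and $c$ acts on the Heegaard torus by the hyperelliptic involution, which is the defining property of $c_A$ from \cite{HoRu}. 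Hence $c=c_A$ and the Honda contact structure $\xi$ is $A$-real.

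The main obstacle is the equivariant front-projection verification in the second paragraph: one must confirm that every Honda-admissible rotation tuple $(\rot L_1,\ldots,\rot L_n)$ is achievable by a $\cst$-equivariant chain. This reduces to a local combinatorial analysis of equivariant stabilizations on a single Legendrian unknot, which is independent of the specific $L(p,q)$ under consideration and should follow from a careful bookkeeping of cusp signs under the reflection symmetry on the front.
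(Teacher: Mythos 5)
Your proposal follows essentially the same route as the paper: take Honda's chain-of-unknots surgery presentation, realize each component as a $\cst$-invariant Legendrian ($c_1$-)unknot so that every admissible stabilization is achieved equivariantly, and perform equivariant contact $(-1)$-surgeries of type~$1_1$. The only real difference is the final identification of the induced real structure with type~A: you argue via the hyperelliptic action on an invariant Heegaard torus (which needs the surgered solid torus to remain a solid torus), whereas the paper simply notes that the quotient of the equivariantly surgered manifold is $S^3$ and invokes the uniqueness of such a real structure from \cite{HoRu}.
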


\textbf{Acknowledgements.} 
The authors thank the anonymous referee for essential corrections and suggestions;  and the Mathematics Village, Izmir, for their hospitality during a research-in-pairs stay.
The second author is grateful to Sinem Onaran and Marc Kegel for their comments on contact diagrams.
\newpage

\section{Preliminaries and equivariant contact surgery}
\label{EQS}

\subsection{Basic definitions}
\label{basic}
In the sequel we always reside in the smooth category, both for spaces and maps, for the sake of keeping the rapport between involutions and contact structures. We note that 
in the topological category some claims in the equivariant realm may fail, e.g. the Smith conjecture \cite{MoZi}.

On a solid torus $S^1\times D^2$, there are four real structures up to isotopy through real structures  \cite{Ha}.
We choose  an oriented  identification of the boundary $T^2$ with $\R^2_{(x,y)}/ \Z^2$, where $x$ 
direction corresponds to the meridional direction of $T^2$, and fix the coordinates of 
$S^1\times D^2$  as  $(y, t, x)$ where $t$ is the radial direction of $D^2$.
In these coordinates, the four real structures on the solid torus are:

\noindent (1) $c_1:(y,(x,t))\mapsto (-y,(t,-x))$;\\
(2) $c_2:(y,(t,x))\mapsto (y,(t,x+\frac{1}{2}))$;\\
(3) $c_3:(y,(t,x))\mapsto (y+\frac{1}{2},(t,x))$;\\
(4) $c_4:(y,(t,x))\mapsto (y+\frac{1}{2},(t,x+\frac{1}{2}))$.

Any orientation
preserving involution on $T^2$ can be extended to an involution on $S^1\times D^2$. Such an extension is
unique up to isotopy and fixes a core of the solid torus setwise \cite{Ha}. A $c_j$-knot is by definition a knot which has a $c_j$-equivariant neighborhood.

Likewise there are three involutions on the core circle 
up to isotopy through involutions: reflection, identity and rotation by $\pi$ (antipodal map). 
We also denote them by $c_1$, $c_2$ (or id) and $c_3$ respectively. Note that $c_4|_{\textnormal{core}}=c_3$ too.

An embedded Heegaard decomposition for a real 3-manifold $(M,c_M)$ is said to be {\em real} if $c_M$ exchanges the two Heegaard handlebodies. In this case $c_M$ restricts to a real structure on the Heegaard surface $H$, reversing the orientation. 
It was proven by T.~A.~Nagase that every real 3-manifold admits
an (embedded) real Heegaard decomposition \cite[Proposition~2.4]{Na}.
As usual one can also construct a real 3-manifold by a pair of  handlebodies, both diffeomorphic to a handlebody $U$, and a gluing map $c:U\ra U$  which is a real structure on $U$ \cite{OzSa2}. Such a  decomposition of the 3-manifold is called an abstract real Heegaard decomposition, denoted in the sequel by the pair $(\partial~U,c)$.
The minimal genus among all Heegaard surfaces of all possible real Heegaard decompositions of $(M,c_M)$
is called the real Heegaard genus of $(M,c_M)$. It is greater than or equal to the Heegaard genus of $M$. See \cite{OzSa2} for a detailed discussion on real Heegaard decompositions.

A particular way to produce a real Heegaard decomposition is through real open books \cite{OzSa2}.
Let $(S, f)$ be an abstract open book, where S is a compact surface with boundary and $f : S \ra S$
is the monodromy with $f|_{\partial S}=$~identity. For a real structure $c$ on $S$,  the triple $(S, f, c)$ with $f \circ c = c \circ f^{-1}$ is called an (abstract) real open book.
The map $c_{\pi}=c\circ f$ is a real structure on the page~$\pi$ of the open book so that $f=c\circ c_{\pi}$. 
An abstract real open book determines a real 3-manifold $(M,c_M)$ uniquely and canonically. The union of the page~0 and the page~$\pi$ is a real Heegaard surface in 
$M$ and the real structure $c_M$ is the identity map between the two identical handlebodies of the Heegaard splitting. 
The restrictions of $c_M$ to the page~0 and the page~$\pi$ are respectively $c$ and $c_{\pi}$. As in the usual setting, there is a notion of positive real stabilization developed in  \cite{OzSa2}. Up to equivariant isotopy there are 9 distinct ways to attach handles to $S$ and to extend the real structure over the new handles (see  \cite[Figure~3]{OzSa2}).

In the sequel, instead of using the term $c_M$-real, we usually drop the reference to $c_M$ whenever the real structure is understood. The real structures and the real parts will be in red wherever color is possible. We assume that all contact structures are oriented and positive.

\subsection{Equivariant surgery}

Let $(M, c_M )$ be a closed, oriented real 3-manifold and $K$ be a $c_M$-invariant knot. Then $K$ has a
unique equivariant tubular neighborhood $N (K)$, which is equivariantly
isotopic to one of the real solid tori $(S^1 \times D^2 , c_i)$.
An invariant knot $K$ is called a $c_i$-knot if it has an equivariant neighborhood of type $(S^1 \times D^2 , c_i)$. The common name used for a $c_1$-knot is a strongly invertible knot (under the involution).

A topological $(p/q)$-surgery ($p,q\in\Z$) 
along the knot $K$ takes the meridian-longitude pair $(\mu_0,\lambda_0)$ of the new solid torus $N_0$ to
$(p\mu+q\lambda,p' \mu+q' \lambda)$ on the boundary of the excised neighborhood $N$ via a gluing map $\varphi$. Note that here $pq'-qp'=-1$ and $\lambda$ must be chosen. (The following discussion is with respect to a fixed $\lambda$.) This surgery can be performed naturally in the real setting along a $c_i$-knot.  The real extension $c_j$ over the surgered solid torus is unique in the following way.
Indeed in every possible case it suffices to check that $c_i\circ \varphi = \varphi \circ c_j$.
\begin{itemize}
\item If $K$ is a $c_1$-knot then the real structure extends as $c_1$.

\item If $K$ is a $c_2$-knot then the real structure extends as 
$\left\{ \ba{l} c_2 \mbox{ if $q$ even;} \\ 
 c_3 \mbox{ if $q$ odd, $q'$ even;} \\ 
 c_4 \mbox{ if $q$ odd, $q'$ odd;} \ea \right.$

\item If $K$ is a $c_3$-knot then the real structure extends as 
$\left\{ \ba{l} c_2 \mbox{ if $p$ even;} \\ 
 c_3 \mbox{ if $p$ odd, $p'$ even;} \\ 
 c_4 \mbox{ if $p$ odd, $p'$ odd;} \ea \right.$

\item If $K$ is a $c_4$-knot then the real structure extends as 
$\left\{ \ba{l} c_2 \mbox{ if $p+q$ even;} \\ 
 c_3 \mbox{ if $p+q$ odd, $p'+q'$ even;} \\ 
 c_4 \mbox{ if $p+q$ odd, $p'+q'$ odd.} \ea \right.$
\end{itemize}

\begin{defn}
If a $c_I$-solid torus is excised and a $c_J$-solid torus is glued back, we call such a surgery of type~$I_J$. As a final type of equivariant surgery, consider a knot $K$ satisfying $K \cap  \Fix~ (c_M ) = \emptyset$ and its disjoint copy $K' = c_M (K)$. An equivariant pair of surgeries performed along $K$ and  $K'$ will be coined as a type-5 surgery (along $K$ and  $K'$).
Since $c_M$ is an orientation preserving homeomorphism, the surgery framings along $K$ and  $K'$ are equal.
\end{defn}

\begin{remark}
Note that a $2_3$-surgery followed by a Dehn twist along the meridian (which alters the parity of $q'$) produces a manifold equivariantly diffeomorphic to one obtained by a single $2_4$-surgery. Similarly $3_3$- and $3_4$-surgeries are equivalent in that sense since a meridional Dehn twist alters the parity of $p'$. Similarly for $4_3$- and $4_4$-surgeries. We will use this subtle remark repeatedly in the sequel since we are content with diffeomorphisms, not isotopies. However if the latter is in question then one should a priori distinguish between  $I_3$- and $I_4$-surgeries, $I=2,3$ or $4$.
\label{izotopdegil}
\end{remark}

Let us also note that in case of a $1_1$ surgery, the number $|\Fix|$ of connected
components of the real part  may change. In fact, for a $c_1$-knot $K$,  if the fixed points of $K$ belong to the same (respectively different) component(s) of the real part, $|\Fix|$ either increases (respectively decreases) by 1 or stays the same.
Meanwhile,  in case of $2_3$- or $2_4$-surgery, $K$ is a $c_2$-knot (a real knot) and $|\Fix|$ decreases
by 1; in case of $3_2$- or $4_2$-surgery, $|\Fix|$ increases by 1.
The $3_4$- and $4_3$-surgeries do not alter  $|\Fix|$.

\subsection{Equivariant contact surgery}
The possibility of contact surgery for a rational coefficient relies on the  existence of a tight contact solid torus with the required contact structure on its convex boundary. It is known that the germ of a contact structure near the convex boundary is determined by a collection of curves on the boundary, called the dividing set. On a convex torus, this picture can be standardized further to obtain 
linear curves as the dividing set, so that the common slope of these curves determine
the contact structure near the torus. Such convex tori are said to be in standard form. 
The slope on the standard contact neigborhood of a Legendrian curve $L$ is determined by the contact twisting (denoted $tw(L)$) of the curve. The twisting is well-defined after a choice of a longitude for $L$.   
(See e.g. \cite{Ge} or \cite{Ho} for a thorough discussion for convex surfaces, slopes and twisting.)

Similarly the possibility of equivariant contact surgery for a nonzero rational coefficient relies of course on the  existence of an equivariant tight contact solid torus with the required slope on its convex boundary. The equivariant counterpart of the terms above (i.e. the standard equivariant contact neigborhood theorems, equivariant convex surfaces etc.) has been studied in  \cite{OzSa1} and \cite{OnOz}. Here we put together the previously known existence results for equivariant tight solid tori after \cite{OnOz}.

\begin{thm}
We have the following listed existence/nonexistence results regarding equivariant tight solid tori with convex boundary. In case of existence, it is unique up to  equivariant  contact isotopy relative to the equivariant convex boundary in standard form such that the dividing set has slope $s$ and has exactly two connected components. Below $k\in\Z$; $1/0$ is considered as $\infty$.
\begin{itemize}
	\item A $c_1$-real tight solid torus with $s=1/k$ exists.
	\item A $c_2$-real tight solid torus exists if and only if $s=1/k$.
	\item A $c_3$-real tight solid torus exists if and only if $s=1/(2k+1)$.
	\item A $c_4$-real tight solid torus exists if and only if $s=1/(2k)$.
	\end{itemize}
All of the solid tori above are the standard neighborhoods of equivariant Legendrian knots with $tw=1/s$. Here $tw$ is with respect to a fixed longitude.
\label{realst}
\end{thm}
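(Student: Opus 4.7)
My plan is to combine Honda's non-equivariant classification of tight contact structures on the solid torus with a boundary obstruction analysis and explicit equivariant Legendrian constructions from \cite{OnOz}. Honda's work already yields existence and uniqueness rel boundary for a tight contact structure on $S^1\times D^2$ whose convex boundary has two parallel dividing curves of slope $s=1/k$, so the entire content of the theorem lies in deciding which of the four real structures $c_i$ admits a contact lift and in the equivariant uniqueness.

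\emph{Step 1 (boundary obstruction).} Suppose $\xi$ is $c_i$-real and makes $\partial(S^1\times D^2)$ convex with two dividing curves of slope $s=1/k$. Since $(c_i)_*\xi=-\xi$ while the outward transverse contact vector field is preserved up to positive rescaling, $c_i$ must preserve the dividing set $\Gamma$ and exchange the two regions $R_\pm$ of $T^2\setminus\Gamma$. For $k\ne 0$ the transverse coordinate $u=y-x/k$ has period $1/k$, and the two dividing curves sit at $u\equiv 0$ and $u\equiv 1/(2k)$. From the formulas in Section~2.1,
\[ c_1^*u=-u,\quad c_2^*u=u-\tfrac{1}{2k},\quad c_3^*u=u+\tfrac{1}{2},\quad c_4^*u=u+\tfrac{1}{2}-\tfrac{1}{2k}. \]
Using the elementary congruence $\tfrac12\equiv\tfrac{1}{2k}\pmod{1/k}$ exactly when $k$ is odd, the required exchange of $R_\pm$ holds unconditionally for $c_1,c_2$, only when $k$ is odd for $c_3$, and only when $k$ is even for $c_4$. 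The limiting case $k=0$ ($s=\infty$) is handled by replacing $u$ with the coordinate $x$ of period $1$ and produces the same compatibility with the bullets (the only exclusion being $c_3$ at $s=\infty$).

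\emph{Step 2 (Legendrian realization).} For each admissible pair $(c_i, s=1/k)$ I will realize the contact solid torus as the standard convex neighborhood of an equivariant Legendrian unknot $K$ with $\tw(K)=k$ on which $c_i$ restricts as the corresponding core involution from Section~2.1: reflection for $c_1$, identity for $c_2$, and the free antipodal involution for $c_3,c_4$. For $c_1,c_2$ any integer twisting is realized from a symmetric Legendrian front. For $c_3,c_4$ the parity of $\tw(K)$ is forced by a $\Z/2$-equivariance computation on the contact framing, matching exactly the parities $2k+1$ and $2k$ demanded by the theorem; these equivariant Legendrian unknots are the ones constructed in \cite{OnOz}. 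Their standard neighborhoods are by construction $c_i$-real tight contact solid tori with the prescribed convex boundary.

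The most delicate step, and where I expect the principal technical obstacle, is uniqueness up to equivariant contact isotopy rel the equivariant convex boundary in standard form. My plan is to use that any two-component-dividing-set tight contact solid torus is the standard convex neighborhood of its Legendrian core, and then to combine this with the equivariant Legendrian unknot classification in \cite{OnOz}: two such equivariant Legendrian unknots of matching core-involution type and twisting number are equivariantly Legendrian isotopic. This equivariant Legendrian isotopy upgrades to an equivariant contactomorphism of the ambient standard neighborhoods, yielding the equivariant uniqueness assertion.
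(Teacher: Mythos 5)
The first thing to say is that the paper does not prove this statement at all: Theorem~2.3 is introduced with ``let us put together the previously known existence results of \cite{OnOz}'' and is simply imported from that reference. So there is no in-paper argument to compare yours against, and any honest assessment has to be of your proposal on its own terms.

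Your Step~1 is correct and is exactly the right obstruction: an anti-contact involution preserving a convex torus with an equivariant transverse contact vector field must preserve $\Gamma$ and exchange $R_\pm$, and your computation of the action on the transverse coordinate $u$ reproduces precisely the parities in the statement (including the exclusion of $c_3$ at $s=\infty$). One small hole: for the free involutions $c_3,c_4$ you implicitly assume the two dividing curves are the linear level sets $u\equiv 0,\,1/(2k)$, and you should also rule out the a priori possibility that the involution preserves \emph{each} dividing curve setwise while exchanging the annuli. This is excluded by passing to the quotient torus: the two curves would descend to two disjoint essential curves whose complement would have to be a single annulus, which is impossible; equivalently, when the curves are exchanged their common image must be primitive of class $(2,k)$ in the quotient, forcing the stated parity. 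The more serious issue is that Steps~2 and~3 --- the existence of the equivariant Legendrian models for every admissible twisting and, above all, the uniqueness up to \emph{equivariant} contact isotopy rel boundary --- are delegated back to \cite{OnOz}, which is circular here since \cite{OnOz} is precisely the source of the theorem. A self-contained proof would need the equivariant Legendrian neighborhood theorem (available in \cite{OzSa1}), an equivariant isotopy of the core of an invariant solid torus, and an equivariant version of the uniqueness of the tight $T^2\times I$ layer between the small standard neighborhood and the given boundary; none of this is supplied, and it is the genuinely hard part. Note also that non-equivariant uniqueness from Honda's classification does not automatically upgrade: two equivariantly non-isotopic real structures on isotopic contact structures are exactly the phenomenon the theorem is ruling out.
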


Now let $K$ be an equivariant Legendrian knot in a real tight contact manifold $(M,\xi,c_M)$. Let $N$ be a standard equivariant contact neighborhood of $K$,  the existence of which is warranted by \cite{OzSa1} and \cite{OnOz}.
A contact $(p/q)$-surgery on $K$ is with respect to the choice of the longitude $\lambda$ as the dividing set on the boundary of $N$.
To have a well defined contact structure on the surgered 3-manifold, the dividing sets of $\partial N_0$ and $\partial N$ must match. By the identification above, the curve     $-p'\mu_0+p\lambda_0$ maps to the longitude $\lambda$, so the solid torus $N_0$ should be a tight contact solid torus with boundary slope $-p/{p'}$.

We now repeat this discussion in the equivariant setting. We will be  interested in the following cases.

\textbf{Case 1.} $K$ is a $c_1$-knot. The extension would be $c_1$. Since tight $c_1$-solid tori of slope $(1/p')$ ($p'\in\Z$) uniquely exist, equivariant contact  $(1/q)$-surgery of type~$1_1$ is uniquely defined up to equivariant contact isotopy. We fill in the standard $c_1$-real tight neighborhood of a $c_1$-invariant knot with $tw = -p'$ .

\textbf{Case 2.}  $K$ is a $c_2$-knot. It follows from Theorem~\ref{realst} that equivariant contact  $(p/q)-$surgery is defined if and only if $p=1$.  Similarly for the other cases below.

If $q$ and $q'$ are both odd, then $p'$ must be even and the real contact structure extends uniquely  as a $c_4$-tight solid torus with slope $-1/{p'}$ ($\infty$ included here and below whenever $p'=0$).

If $q$ is odd and $q'$ is even, then $p'$ must be odd and the real contact structure extends uniquely as a $c_3$-tight solid torus with slope $-1/{p'}$.

If $q$ is even, then the real contact structure extends as a $c_2$-tight solid torus with  slope $-1/{p'}$.

Note for the first two cases here that the parity of $q'$ is not well-defined: given $p$ and $q$ one can alter the parity of $p'$ and $q'$ by meridional Dehn twists. This is not a problem in the contact setting since a meridional Dehn twist extends over a solid torus and is smoothly isotopic to the identity. However it is not isotopic to the identity through real structures (See Remark~\ref{izotopdegil} and the conclusion in Theorem~\ref{contactsurgery}). 

\textbf{Case 3.} $K$ is a $c_3$-knot. Then $p=1$ as before. 

If $p'$ is odd, then the real structure extends as $c_4$ but no $c_4$-real tight solid torus exists with slope $-1/{p'}$ for odd $p'$.

If $p'$ is even, then the real structure extends as $c_3$ but no $c_3$-real tight solid torus exists with slope $-1/{p'}$ for even $p'$.

\textbf{Case 4.} $K$ is a $c_4$-knot. Then $p=1$. 

If $p+q=1+q$ is odd (i.e. $q$ is even; so $q'$ is odd) and $p'+q'$ is odd, then $p'$ must be even and the real contact structure extends uniquely as a $c_4$-tight solid torus with slope $-1/{p'}$.

If $q$ is even and $p'+q'$ is even, then $p'$ must be odd and the real contact structure extends uniquely as a $c_3$-tight solid torus with slope $-1/{p'}$.

If $q$ is odd, then the real contact structure extends uniquely as a $c_2$-tight solid torus with slope $-1/{p'}$.

Similar remark as the one following Case~2 above applies here for the  parity of $p'$: it is not well-defined and can be altered. 
Let us summarize the above discussion.

\begin{thm}[Equivariant Contact Surgery along Equivariant Legendrian Knots]
Equivariant contact surgery along a Legendrian $c_3$-knot is impossible. Equivariant contact $(1/q)$-surgery ($q\in \Z$) along a Legendrian $c_1$-, $c_2$- or $c_4$-knot is uniquely defined up to equivariant contactomorphism.
For a $c_2$- or $c_4$-knot the only possible contact surgery coefficient is  $1/q$. 

Moreover for the  surgery types $1_1,2_2$ and $4_2$ the  uniqueness is up to equivariant contact isotopy.

Table \ref{tab:extensions} details the cases for equivariant contact surgery.
The last column describes the type~$c$ of the glued back solid torus:  a standard $c$-real tight neighborhood of a $c$-invariant knot with $tw=-p'$. 
\label{contactsurgery}
\end{thm}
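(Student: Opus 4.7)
The plan is to prove Theorem~\ref{contactsurgery} by organizing the case-by-case analysis laid out in Cases~1--4 preceding the statement. For each equivariant Legendrian knot $K$ of type $c_i$, performing a contact $(p/q)$-surgery requires two inputs: a standard equivariant contact neighborhood $N$ of $K$, whose existence and uniqueness come from \cite{OzSa1} and \cite{OnOz}; and a real tight contact structure on the glued-in torus $N_0$ of type $c_j$ determined by the parities in the gluing matrix and of boundary slope $-p/p'$, whose existence, slope restrictions, and uniqueness are the content of Theorem~\ref{realst}. The theorem then follows by matching these two inputs across the four cases.

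First I would extract the restriction to coefficient $1/q$ in Cases~2, 3, 4. The boundary of $N_0$ must carry slope $-p/p'$, and Theorem~\ref{realst} provides $c_2$-, $c_3$-, or $c_4$-real tight solid tori only at slopes of the form $1/k$. This forces $p=1$, proving the ``only $1/q$'' claim. The non-existence in Case~3 (a $c_3$-knot) then follows at once: with $p=1$ the extension rule gives $c_3$ for $p'$ even and $c_4$ for $p'$ odd, but Theorem~\ref{realst} requires precisely the opposite parity of $p'$ for each of these tight fillings to exist. For Cases~1, 2, 4 a short parity bookkeeping using $pq'-qp'=-1$ confirms that the parity of $p'$ forced by the extension rule always agrees with the parity Theorem~\ref{realst} demands for the corresponding filling, establishing existence. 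Uniqueness up to equivariant contactomorphism then follows by combining uniqueness of $N$ with the uniqueness up to equivariant contact isotopy of the filling $N_0$.

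For the final clause concerning the types $1_1$, $2_2$ and $4_2$, I would invoke Remark~\ref{izotopdegil}. In each of these three types, the type $c_j$ of the glued torus is determined intrinsically by $(p,q)$ alone, the extension rule there not branching on the parity of $p'$, so the resulting surgery is unique up to equivariant contact isotopy. In the remaining types $2_3$, $2_4$, $4_3$, $4_4$, a meridional Dehn twist on $\partial N$ flips the parity of $p'$ (or $q'$) and so swaps $c_3$- and $c_4$-fillings; this twist is smoothly isotopic to the identity and extends contactomorphically across $N_0$, but is not isotopic to the identity through real structures. Pinning down this failure is the main subtle point of the proof, and it precisely explains where uniqueness degrades from isotopy to contactomorphism.
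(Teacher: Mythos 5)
Your proposal is correct and follows essentially the same route as the paper, whose proof of this theorem is precisely the case-by-case discussion (Cases 1--4) combining the extension rules for the real structure with the existence/parity constraints of Theorem~\ref{realst}, together with the observation in Remark~\ref{izotopdegil} that the meridional Dehn twist altering the parity of $p'$ (or $q'$) is smoothly but not equivariantly isotopic to the identity, which is exactly how you account for the weaker uniqueness in the $i_3$/$i_4$ cases.
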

\begin{table}[ht]
	\centering
	\begin{tabular}{|c| c| c|}
		\hline
		Knot &  Case & Glue back\\ [1ex] 
		\hline
		$c_1$  &   & $c=c_1$ \\
		\hline
		$c_2$  & $q$ odd, $q'$ odd & $c=c_4$\\
		\hline
		  & $q$ odd, $q'$ even & $c=c_3$\\
		\hline
		 & $q$ even & $c=c_2$\\
		\hline
		$c_3$ & $\emptyset$  & \\
		\hline
		$c_4$ & $q$ even, $p'$ even & $c=c_4$\\
		\hline
		 &  $q$ even, $p'$ odd & $c=c_3$\\
		\hline
		 & $q$ odd & $c=c_2$\\
		\hline
	\end{tabular}
\caption{Equivariant contact  $1/q$-surgery along $c_i$-knots. We glue back the standard $c$-real tight neighborhood of a $c$-invariant knot with $tw=-p'$}
	\label{tab:extensions}
\end{table}

\noindent {\it Proof of Proposition~\ref{OTinS3}.}
The number of  overtwisted contact structures on $S^3$ up to contact isotopy  is countably infinite. They are distinguished by  the $d_3$~invariant, which is a half integer for $S^3$. (For $d_3$, see \cite{Go} where it was first defined or see e.g. \cite{DiGeSt}.) The overtwisted structure $\xi_{-2p}$ with $d_3=-2p+1/2$, $p\in \Z$,  can be obtained by a contact surgery given explicitly in, for example, \cite[Figure~8]{EtOz}. Observing the 
symmetry here, one can immediately turn this diagram into an equivariant contact one along a pair of $c_1$-real knots with surgery coefficients $+1$ and $-1/p$ 
(see Figure~\ref{xipinS3}), except for $p=0$ when there is a single knot, the one on the left  in Figure~\ref{xipinS3} with surgery coefficient $+1$. 
These equivariant contact surgeries are possible by Theorem~\ref{contactsurgery}. 
Recall that the equivariant contact connected sum between two real contact 3-manifolds is well-defined thanks to the fact that there is a unique real tight 3-ball \cite{OzSa1}. 
Thus taking connected sums of arbitrarily many $(S^3,\cst,\xi_{k})$ with $k=0$ and $d_3=1/2$ (respectively $k=-2$ and $d_3=-3/2$) 
one obtains every  overtwisted 3-sphere with positive (respectively negative) $d_3$ (cf.  \cite[Lemma~4.2]{DiGeSt}).
Alternatively one could consider the real contact 3-spheres obtained by taking the connected sum between $(S^3,\cst,\xi_{0})$ and the 3-sphere in Figure~\ref{xipinS3} for varying $p\in\Z$. Note that in each case the equivariant contact diagram is simply the disjoint union of the previous ones.

In general one must also check that the final real structure is the desired one. But here that is immediate by the uniqueness of  $\cst$. 
\hfill $\Box$

\begin{figure}[h]
\centering
\resizebox{7cm}{!}
{\input{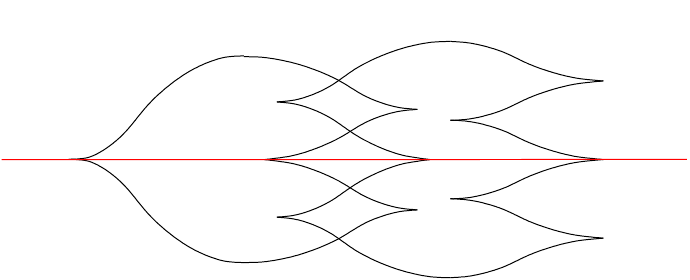_t}}
\caption{The $\cst$-real overtwisted structure $\xi_{-2p}$ on $S^3$, $p\in\Z$. In the case $p=0$, the $\infty$-surgery on the right disappears.}
\label{xipinS3} 
\end{figure}

\section{Going recursively from real $S^3$ to any real 3-manifold}
\label{S3R3M}
In this section we will prove the following theorem of recursive nature.

\begin{thm} 
\label{arda}
Every closed real 3-manifold $(M,c_M)$ can be obtained via a finite number of Dehn surgeries  along an ordered, recursively invariant collection of  knots starting from the real 3-sphere $(S^3,c_{\st})$.

More precisely, there is a sequence $\{(M_j,s_j;K_j)\}_{j=0}^k$ of real 3-manifolds,  a common Heegaard surface $H$ 
and $s_j$-invariant knots $K_j\subset H \subset M_j$ such that $(M_0,s_0)=(S^3,c_{\st})$ and $(M_k,s_k)$ is equivariantly diffeomorphic to $(M,c_M)$ and each $(M_{j+1},s_{j+1})$, $0\leq j \leq k$, is obtained from $(M_j,s_j)$ via a $\pm 1$  surgery along $K_j$ where the framing along $K_j$ is determined by $H$; at each step, the real structure is canonically extended to the surgered region.
\end{thm}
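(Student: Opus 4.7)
The plan is to realize $(M,c_M)$ by iteratively modifying an equivariant Heegaard splitting of $(S^3,c_{\st})$ via twists along curves on a fixed Heegaard surface. First I would choose an equivariant Heegaard splitting $M = U_1 \cup_{\varphi_M} U_2$ of $(M,c_M)$, where $c_M$ preserves the splitting (either fixing each handlebody setwise or, after an initial equivariant stabilization, reducing to this case) and acts on the Heegaard surface $H=\partial U_1$ as some real involution $r$ of $\Sigma_g$. Existence of equivariant Heegaard splittings for finite group actions on 3-manifolds is classical; in our situation the involution type $r$ on $H$ is one of the standard forms listed after Remark~\ref{izotopdegil}. Next, I would exhibit $(S^3,c_{\st})$ with an equivariant Heegaard splitting of the \emph{same} genus $g$ and of the \emph{same} real type $(H,r)$, obtained from the standard genus-$0$ splitting of $(S^3,c_{\st})$ by equivariant stabilizations that successively adjust $|\Fix|$ and the separating/non-separating character so as to match $r$.

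With both manifolds presented as $U_1\cup_{\varphi}U_2$ for $r$-equivariant gluings $\varphi$, the difference $\psi := \varphi_M\circ\varphi_{S^3}^{-1}$ represents an element of the equivariant mapping class group $\mathrm{Mod}(H,r)$. The central algebraic input I would use is that $\mathrm{Mod}(H,r)$ is generated by Dehn twists along $r$-invariant simple closed curves, together with (when some curves come in $r$-swapped pairs) simultaneous twists along a curve and its image, which is precisely the type-$5$ operation of Section~\ref{EQS}. This is the equivariant analogue of the classical Lickorish generating set and follows from Birman--Hilden-type arguments applied to the quotient orbifold $H/r$, whose ordinary mapping class group is generated by twists that lift back to $r$-equivariant twists on $H$. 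Writing $\psi = \tau_{\gamma_k}^{\varepsilon_k}\circ\cdots\circ\tau_{\gamma_1}^{\varepsilon_1}$ with $\varepsilon_j\in\{\pm 1\}$ and each $\gamma_j\subset H$ an $s_j$-invariant simple closed curve gives the required ordering.

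Then I would recursively define $(M_{j+1},s_{j+1})$ by performing a $(-\varepsilon_j)$-surgery on $K_j = \gamma_j$ viewed as a knot in $M_j$, framed by the Heegaard surface $H$. A surface-framed $\pm 1$ surgery along a curve on a Heegaard surface is precisely the topological operation that changes the gluing map by the corresponding Dehn twist, so after $k$ steps we reach a manifold with gluing $\varphi_{S^3}\circ\psi = \varphi_M$, equivariantly diffeomorphic to $(M,c_M)$. At each stage the knot $K_j$ is invariant for the current structure $s_j$, so it is a $c_i$-knot for some $i$, and the surgery rules listed in Section~\ref{EQS} canonically extend $s_j$ over the reglued solid torus to produce $s_{j+1}$.

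The main obstacle I expect is the generation statement: showing that for \emph{every} isomorphism class of real involution $r$ on $\Sigma_g$ (both separating and non-separating, and across all admissible values of $|\Fix(r)|$), $\mathrm{Mod}(H,r)$ is generated by twists along $r$-invariant curves and by type-$5$ pairs. One has to verify that the orbifold generators of $\mathrm{Mod}(H/r)$ all lift to $r$-equivariant Dehn twists on $H$, rather than to half-twists or more complicated symmetries, and handle carefully the curves meeting $\Fix(r)$. A secondary obstacle is the initial matching step: aligning the equivariant Heegaard splittings of $(M,c_M)$ and $(S^3,c_{\st})$ so that they share the same $H$ and $r$ requires an equivariant Reidemeister--Singer-type stabilization theorem, which I would establish by showing that the four types of invariant stabilizations allowed by Theorem~\ref{contactsurgery}'s precursors suffice to raise genus while independently modifying the real type to any prescribed standard form.
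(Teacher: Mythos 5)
Your proposal takes a genuinely different route from the paper, but as written it has gaps at exactly the points where the real work lies, so it does not yet constitute a proof.

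The structural difference first. In the paper a ``real Heegaard splitting'' is one in which $c_M$ \emph{swaps} the two handlebodies, so that the gluing map of the splitting is literally $c=c_M|_H$, an orientation-reversing involution of $H$; such splittings exist in every genus because they are induced by real open books. The element that must be written as a product of twists is then $c\cdot s$ (a composition of two real structures), and the required invariance is with respect to the \emph{evolving} sequence $s_{j}=\tau_{r_j}^{\sigma_j}\cdot s_{j-1}$ of real structures on $H$ --- this is what ``recursively invariant'' means, and it is produced explicitly by Lemma~\ref{ifade} together with the palindrome factorization of Lemma~\ref{even}, using only the classification of real structures on surfaces by $|\Fix|$ and separation. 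You instead fix a single involution $r$ on $H$, assume both manifolds are glued by $r$-equivariant maps, and propose to factor the difference $\psi$ inside the centralizer $\mathrm{Mod}(H,r)$. That is a different (and in places harder) problem, and it feeds back into the statement awkwardly: the theorem asks for an ordered sequence of single knots each invariant under the \emph{current} real structure $s_j$, whereas your generating set includes type-5 swapped pairs, which only fit under clause (ii) of the recursive invariance definition and are in fact reserved by the paper for the Lickorish--Wallace version (Theorem~\ref{elw}), not for this theorem.

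The concrete gaps are the two items you yourself flag as ``obstacles.'' (1) The reduction to a splitting in which $c_M$ preserves each handlebody setwise, and the matching of this with an equivariant splitting of $(S^3,\cst)$ of the same genus and type, is not justified: you would need an equivariant Reidemeister--Singer theorem, and you would also need to match the induced involutions on the \emph{handlebodies} (uniqueness of the extension of a boundary involution over a handlebody is only quoted in the paper for solid tori). Note also that if the handlebodies are preserved then $r=c_M|_H$ is orientation-\emph{preserving}, so it is not one of the real structures classified in Section~2.1, and the ``standard forms'' you invoke do not apply. (2) The generation of $\mathrm{Mod}(H,r)$ by twists along invariant curves and type-5 pairs is a substantial theorem in its own right; Birman--Hilden gives an isomorphism with an orbifold mapping class group only under hypotheses, and one must check that all orbifold generators (including half-twists permuting branch points) lift to the claimed equivariant twists. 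The paper never needs a generating set for any centralizer: it writes down an explicit recursively invariant factorization of $c\cdot s$ by hand. Until (1) and (2) are proved, the proposal is an outline of an alternative strategy rather than a proof.
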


First we will prove several lemmata regarding factorizations in the mapping class group of a real Heegaard surface. To start we need some preliminaries. Two real structures  (orientation reversing involutions) $r$ and $s$ on an closed, oriented genus $g$ surface $\sg$ are said to be equivalent if there is an orientation preserving diffeomorphism $h$ such that $r = h\circ s\circ h^{-1}$. 
It is well-known that the equivalence class of $r$ is determined by the number of connected components of the real part $\Fix~(r)$,  and connectedness of  $\sg -\Fix~(r)$. (This follows from considering the quotient surface and the classification of 2-manifolds.)
In case $\Fix~(r)$ is separating (i.e. $\sg -$ Fix~$(r)$ has exactly  2  connected components),   then $1\leq |\Fix~(r)| \leq g+1$ and $g$ and $|\Fix~(r)|$ have opposite parities. We denote by $\smx$ the maximal real structure in the standard form (see Figure~\ref{humph}). 
If $\Fix~(r)$ is non-separating (i.e. $\sg -\Fix~(r)$ is connected), then   $0\leq |\Fix~(r)| \leq g$. In this case, the real structure in the standard form is denoted by $\mathfrak{s}_{|\mbox{\footnotesize Fix}~(r)|}$.

\begin{figure}[t]
\centering
\resizebox{15cm}{!}
{\input{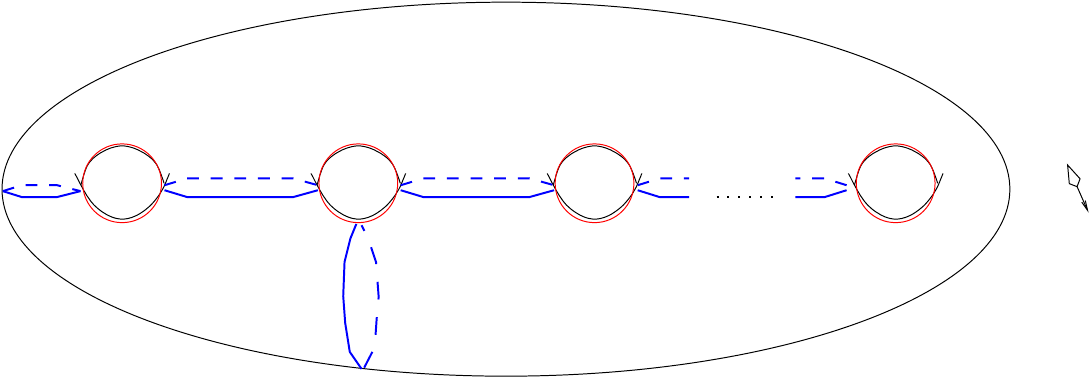_t}}
\caption{The maximal real structure $\smx$ on $\sg$. The set of fixed points is $d_1\cup\ldots\cup d_g\cup d_{g+1}$; here $d_{g+1}$ is the large outer closed curve. The Humphries generators of the mapping class group are $d_1,\ldots, d_g,e_1,\ldots, e_g,m_1$; each is an $\smx$-invariant curve.}
\label{humph} 
\end{figure}

Below $\tau_a$ always denotes the positive Dehn twist along a curve $a$ on a surface.

\begin{lemma} 
Any real structure $s$ on $\sg$ with $|\Fix~(s)| = k$ can be expressed in the form
\begin{equation} s =\tau_{\beta}^m \cdot \tau_{b_u}\ldots \tau_{b_1}\cdot p \cdot \smx
\end{equation}
where $ p = \tau_{a_1}^{\sigma_1} \ldots \tau_{a_w}^{\sigma_w}  \cdot \tau_{a_w}^{\sigma_w}  \ldots \tau_{a_1}^{\sigma_1}$   is a
palindrome (possibly empty) with even length and with each $a_j$ $\smx$-invariant, $\sigma_j \in\Z$; each $b_j \subset \Fix~ (p \cdot \smx)$; 
and $\beta\subset \Fix~ (s)$ with $m = 1$ if $s$ is separating but not maximal and $m = 0$ otherwise.
\label{ifade}
\end{lemma}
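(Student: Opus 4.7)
The plan is to combine the topological classification of orientation-reversing involutions on $\sg$ up to mapping-class-group conjugation with the conjugation identity $s\tau_a s^{-1} = \tau_{s(a)}^{-1}$, valid for any orientation-reversing involution $s$ and any simple closed curve $a$. From this identity, for every $\smx$-invariant $a_j$ one has $\smx\,\tau_{a_j}^{\sigma_j}\,\smx^{-1} = \tau_{a_j}^{-\sigma_j}$, so any palindrome $p$ of the stated form satisfies $\smx\,p\,\smx^{-1} = p^{-1}$, which makes $p\cdot\smx$ an involution. Moreover, setting $h = \tau_{a_1}^{\sigma_1}\cdots\tau_{a_w}^{\sigma_w}$, a direct computation gives $p\cdot\smx = h\,\smx\,h^{-1}$, so $p\cdot\smx$ is $\text{MCG}(\sg)$-conjugate to $\smx$ and has the same topological type. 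The remaining factors $\tau_{b_j}$ and $\tau_\beta^m$ are Dehn twists along curves fixed by $p\cdot\smx$ and $s$ respectively, and each such left multiplication again yields an orientation-reversing involution by the same identity. Hence the right-hand side of (1) is always a real structure; the task is to show every real structure arises this way.

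I argue in two stages. \emph{Stage 1 (maximal case).} Suppose first $|\Fix(s)| = g+1$ with $\Fix(s)$ separating, so $s$ has the same topological type as $\smx$. By classification, $s = h\,\smx\,h^{-1}$ for some $h\in\text{MCG}(\sg)$. Write $h = \tau_{c_1}^{\epsilon_1}\cdots\tau_{c_m}^{\epsilon_m}$. The rewrite $h\,\smx\,h^{-1} = (h\cdot\smx\,h^{-1}\,\smx^{-1})\cdot\smx$, combined with $\smx\,\tau_c\,\smx^{-1} = \tau_{\smx(c)}^{-1}$, exhibits the prefactor as already palindromic in the curves $c_i$ and $\smx(c_i)$. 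Applying real-symmetric chain-type relations inside a regular $\smx$-invariant neighborhood of $c_i\cup\smx(c_i)$, any pair with $c_i\neq\smx(c_i)$ is rewritten as a palindromic product of Dehn twists along $\smx$-invariant curves, producing the required palindrome $p$ with $\smx$-invariant $a_j$'s and giving $s = p\cdot\smx$.

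\emph{Stage 2 (nonmaximal case).} For $s$ with $k<g+1$ (or $s$ nonseparating with $k\leq g$), I build $s$ from a maximal real structure $s^{(0)} = p\cdot\smx$ via iterated half-twist moves: $s^{(i+1)} := \tau_{b_{i+1}}\cdot s^{(i)}$ with $b_{i+1}\subset\Fix(s^{(i)})$. The conjugation identity guarantees each $s^{(i+1)}$ is an orientation-reversing involution, and the local picture of a half-twist shows $|\Fix(s^{(i+1)})| = |\Fix(s^{(i)})|-1$ (the curve $b_{i+1}$ becomes a glide-reflection locus). After $u = g+1-k$ steps we obtain $s^{(u)}$ with $k$ fixed circles. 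If $s^{(u)}$ already has the same topological type as $s$, then the two are MCG-conjugate and, absorbing this conjugation into the choice of $p$ from Stage 1, we obtain (1) with $m=0$. Otherwise $s^{(u)}$ and $s$ differ only in separability; the parity rule $|\Fix|\equiv g+1 \pmod 2$ for separating involutions forces this to occur precisely when $s$ is separating but not maximal, and then a final Dehn twist $\tau_\beta$ along a suitable fixed curve $\beta\subset\Fix(s)$ toggles separability without changing $|\Fix|$, yielding $m=1$.

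The main obstacle I anticipate is in Stage 1: producing the strict palindromic form with \emph{only} $\smx$-invariant curves requires systematically invoking real-symmetric versions of the chain (and, if needed, lantern) relations to replace pairs of Dehn twists along $\smx$-conjugate non-invariant curves by twists along $\smx$-invariant ones, while preserving the palindromic exponent pattern $\sigma_j$. A secondary delicacy is the flexibility in Stage 2: the curves $b_j$ must be chosen so that $s^{(u)}$ realizes the correct topological class permitted by the parity constraints, and the final separability adjustment via $\tau_\beta$ must be verified to leave $|\Fix|$ unchanged. Both issues ultimately reduce to standard but combinatorially careful arguments in the theory of symmetric mapping class groups of surfaces, with the classification of real structures by $(|\Fix|,\text{separability})$ providing the bookkeeping throughout.
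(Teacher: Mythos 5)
Your overall architecture (reduce to the maximal case by composing with twists along fixed circles, then express the conjugating diffeomorphism as a product of Dehn twists and unfold $h\,\smx\,h^{-1}$ into a palindrome times $\smx$) parallels the paper's, but Stage~1 has a genuine gap exactly at the point you flag as "the main obstacle," and it is not a gap that can be waved away. Writing $h=\tau_{c_1}^{\epsilon_1}\cdots\tau_{c_m}^{\epsilon_m}$ with \emph{arbitrary} curves $c_i$, the prefactor $h\cdot\smx h^{-1}\smx^{-1}$ is a product $\tau_{c_1}^{\epsilon_1}\cdots\tau_{c_m}^{\epsilon_m}\cdot\tau_{\smx(c_m)}^{\epsilon_m}\cdots\tau_{\smx(c_1)}^{\epsilon_1}$ whose second half runs over the \emph{image} curves $\smx(c_i)$, not the $c_i$ themselves; this is not the palindrome the lemma demands. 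Your proposed repair --- "real-symmetric chain-type (and lantern) relations inside an $\smx$-invariant neighborhood of $c_i\cup\smx(c_i)$" converting each pair $\tau_{c_i}\tau_{\smx(c_i)}$ into a palindromic word in twists along $\smx$-invariant curves while preserving the exponent pattern --- is asserted, not proved, and there is no standard relation that does this for an arbitrary pair of curves (a curve and its mirror image can intersect in complicated ways, and their regular neighborhood need not support any such relation). The paper sidesteps the whole issue with one observation you are missing: the mapping class group of $\sg$ admits a generating set consisting of Dehn twists along $\smx$-\emph{invariant} curves (draw $\sg$ symmetrically and take the standard Lickorish/Humphries curves, each preserved by the reflection $\smx$). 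Writing $f=\tau_{a_1}^{\sigma_1}\cdots\tau_{a_w}^{\sigma_w}$ in such generators, the identity $\smx\,\tau_{a_j}^{-\sigma_j}\,\smx^{-1}=\tau_{a_j}^{\sigma_j}$ immediately gives $f\smx f^{-1}=\tau_{a_1}^{\sigma_1}\cdots\tau_{a_w}^{\sigma_w}\tau_{a_w}^{\sigma_w}\cdots\tau_{a_1}^{\sigma_1}\cdot\smx$, and the palindrome is there for free. Without that choice of generating set (or a worked-out substitute for your symmetric-relations claim), Stage~1 does not close.

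Two smaller points. First, in Stage~2 your final adjustment is wrong as stated: composing with $\tau_\beta$ along a fixed circle $\beta$ does \emph{not} preserve $|\Fix|$ --- the core circle ceases to be pointwise fixed (in annulus coordinates $s(x,t)=(x,-t)$ and $\tau_\beta s(x,t)=(x+\phi(-t),-t)$ with $\phi(0)=1/2$), so $|\Fix|$ drops by one and separability flips. The paper's version is the correct one: for $s$ separating and non-maximal, $\tau_\beta^{-1}\cdot s$ is non-separating with one fewer fixed circle, and one then applies the non-separating case to it; the parity bookkeeping ($|\Fix|\equiv g+1 \bmod 2$ in the separating case) is consistent with this and not with your version. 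Second, your "absorb the conjugation into the choice of $p$" step at the end of Stage~2 quietly reuses Stage~1 (one must rewrite $f p\smx f^{-1}$ again as a palindrome in $\smx$-invariant twists), so it inherits the same gap; the paper instead conjugates the entire word $\tau_{d_u}\cdots\tau_{d_1}\smx$ at once, which sends the invariant circles $d_j$ to circles $b_j=f(d_j)$ fixed by $f\smx f^{-1}=p\cdot\smx$ and leaves only the single rewriting of $f\smx f^{-1}$ to handle.
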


\begin{proof} (i) Assume $s$ is non-separating. Set $u = g + 1 - k$ and take $\smx$-real circles $d_1,\ldots,d_u$ on $\sg$ . 
Then $\mathfrak{s}_k=\tau_{d_u}\ldots \tau_{d_1}\cdot \smx$ is the standard non-separating real structure with $k$ real circles
hence is conjugate to $s$. Then there is an orientation preserving diffeomorphism $f$ of $\sg$ so that
$$ \ba{ll}
s & = f \mathfrak{s}_k f^{-1} = f \cdot \tau_{d_u}\ldots \tau_{d_1}\cdot \smx \cdot f^{-1} \\
& = f \cdot \tau_{d_u}f^{-1} f \ldots f^{-1} f \tau_{d_1}\cdot f^{-1} f  \smx \cdot f^{-1} \\
& = \tau_{f(d_u)}\ldots \tau_{f(d_1)}\cdot f\smx f^{-1}.
\ea $$
Observe that since each $d_j$ is $\smx$-invariant, each $b_j=f(d_j)$ is $(f\smx f^{-1})$-invariant.  
Furthermore fixing a basis for the mapping class group of $\sg$ consisting of twists on $\smx$-invariant curves (e.g. the 
$\smx$-invariant Humphries generators in Figure~\ref{humph}), 
we can write
$$ f\smx f^{-1} 
= (\tau_{a_1}^{\sigma_1} \ldots \tau_{a_w}^{\sigma_w})\cdot \smx \cdot (\tau_{a_w}^{-\sigma_w} \ldots \tau_{a_1}^{-\sigma_1})
= \tau_{a_1}^{\sigma_1} \ldots \tau_{a_w}^{\sigma_w}\tau_{a_w}^{\sigma_w} \ldots \tau_{a_1}^{\sigma_1}\cdot \smx.$$

\noindent (ii) If $s$ is separating but not maximal, take any circle $\beta\subset \Fix~(s)$. Then $\tau_{\beta}^{-1}\cdot s$ is non-separating and the proof follows from part (i).

\noindent (iii) If $s$ is maximal then the equation holds with $m = 0$ and $u=0$ following  part (i).
\end{proof}

On a real surface $(\sg,s)$, we say that the product of Dehn twists $\tau^{\sigma_m}_{r_m}\cdots\tau^{\sigma_1}_{r_1}$ \textit{ satisfies the recursive $s$-invariance condition} (or is {\it recursively $s$-invariant}) if  for each $j=1,\ldots,m$, \\
(i) $\tau^{\sigma_{j-1}}_{r_{j-1}}\ldots\tau^{\sigma_1}_{r_1}\cdot s (r_j)= r_j$; \\
(ii) or $r_j$ and $r_{j+1}$ are disjoint, and $\tau^{\sigma_{j-1}}_{r_{j-1}}\ldots\tau^{\sigma_1}_{r_1}\cdot s (r_j)= r_{j+1}$. 

We will set $s_0=s$ and, for $j>0$, $s_j=\tau^{\sigma_j}_{r_j} \cdot s_{j-1}$ in case (i).

\begin{lemma} 
The even palindrome  $p$ in the previous lemma  has a  recursively $\smx$-invariant factorization in even powers.
\label{even}
\end{lemma}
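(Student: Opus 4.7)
The plan is to induct on $w$, the half-length of the palindrome $p$. The base case $w=0$ is vacuous, and $w=1$ gives $p=\tau_{a_1}^{2\sigma_1}$, a single twist on the $\smx$-invariant curve $a_1$ with an even exponent, trivially recursively $\smx$-invariant via case~(i). For the inductive step, the key algebraic fact I would exploit is that because each $a_j$ is $\smx$-invariant and $\smx$ is orientation-reversing, $\smx\tau_{a_j}^{\sigma_j}\smx^{-1}=\tau_{a_j}^{-\sigma_j}$, so the palindromic form of $p$ collapses to
\[ p\cdot\smx \;=\; P_w\cdot\smx\cdot P_w^{-1},\qquad \text{where } P_j:=\tau_{a_1}^{\sigma_1}\cdots\tau_{a_j}^{\sigma_j},\ P_0:=\id. \]
This identity will be the engine of the argument.

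Guided by it, I would propose the explicit factorization
\[ p \;=\; \tau_{r_w}^{2\sigma_w}\,\tau_{r_{w-1}}^{2\sigma_{w-1}}\cdots\tau_{r_1}^{2\sigma_1},\qquad r_j:=P_{j-1}(a_j), \]
which one thinks of as being built by successively conjugating $\smx$ by $\tau_{a_w}^{\sigma_w}$ (producing the first twist $\tau_{a_w}^{2\sigma_w}$), then by $\tau_{a_{w-1}}^{\sigma_{w-1}}$, and so on up to $\tau_{a_1}^{\sigma_1}$. Each such conjugation inserts exactly one new Dehn twist whose exponent is doubled to $2\sigma_j$ after sliding $\smx$ through via the orientation-reversing conjugation rule, so all exponents are manifestly even.

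To verify recursive $\smx$-invariance I would prove by induction on $j$ that the partial product satisfies
\[ s_j \;:=\; \tau_{r_j}^{2\sigma_j}\cdots\tau_{r_1}^{2\sigma_1}\cdot\smx \;=\; P_j\,\smx\,P_j^{-1}, \]
a one-line consequence of the conjugation identity. Invariance of $r_j$ under $s_{j-1}$ then reduces to
\[ s_{j-1}(r_j)=P_{j-1}\,\smx\,P_{j-1}^{-1}\bigl(P_{j-1}(a_j)\bigr)=P_{j-1}\bigl(\smx(a_j)\bigr)=P_{j-1}(a_j)=r_j, \]
using only $\smx(a_j)=a_j$. Hence case~(i) of recursive $\smx$-invariance holds for every $j$, and setting $j=w$ in the identity for $s_j$ confirms $s_w=p\cdot\smx$, so the displayed product equals $p$. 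The only real obstacle is guessing the correct candidate $r_j=P_{j-1}(a_j)$; once it is on the table the verification is purely formal, and case~(ii) of the definition is never invoked.
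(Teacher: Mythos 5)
Your proposal is correct and is essentially the paper's own argument: you choose the same curves $r_j=P_{j-1}(a_j)$ (the paper's $f_{j-1}(a_{j+1})$ with $f_j=\tau_{a_1}^{\sigma_1}\cdots\tau_{a_j}^{\sigma_j}$), arrive at the same factorization $p=\tau_{r_w}^{2\sigma_w}\cdots\tau_{r_1}^{2\sigma_1}$, and verify invariance via the same computation $s_{j-1}(r_j)=P_{j-1}\smx P_{j-1}^{-1}(P_{j-1}(a_j))=r_j$. The only cosmetic difference is that the paper first rewrites $p$ as a nested product of squared conjugates and then regroups, whereas you obtain the identity directly from $p\cdot\smx=P_w\smx P_w^{-1}$ and the induction $s_j=P_j\smx P_j^{-1}$, which is a marginally cleaner way to see that the displayed product equals $p$.
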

\begin{proof}
First, using the fact that $p$ is an even palindrome,  observe that it can be written as
$$ p=(\tau^{\sigma_1}_{a_1}\cdots\tau^{\sigma_{w-1}}_{a_{w-1}}\tau^{\sigma_w}_{a_w}\tau^{-\sigma_{w-1}}_{a_{w-1}}\cdots\tau^{-\sigma_1}_{a_1})^2\cdots (\tau^{\sigma_1}_{a_1}\tau^{\sigma_2}_{a_2}\tau^{\sigma_3}_{a_3}\tau^{-\sigma_2}_{a_2}\tau^{-\sigma_1}_{a_1})^2\cdot  (\tau^{\sigma_1}_{a_1}\tau^{\sigma_2}_{a_2}\tau^{-\sigma_1}_{a_1})^2\cdot\tau^{2\sigma_1}_{a_1}.$$
Now for each $1\leq j<w$, set $f_j=\tau^{\sigma_1}_{a_1}\cdots\tau^{\sigma_j}_{a_j}$, $\bar{f_j}=\tau^{\sigma_j}_{a_j}\cdots\tau^{\sigma_1}_{a_1}$, and $r_1=a_1$, $r_{j+1}=f_{j}(a_{j+1})$. Then the factorization above becomes
$$ p=\tau^{2\sigma_w}_{r_w}\cdots \tau^{2\sigma_2}_{r_2}\tau^{2\sigma_1}_{r_1}.$$
We claim that this factorization is recursively $\smx$-invariant. In fact, $r_1=a_1$ is invariant under $\smx$; furthermore, for each $1\leq j<w$,
(dropping the powers $\sigma_j$'s for the sake of clarity)
\begin{eqnarray}
(\tau^2_{r_j}\cdots\tau^2_{r_1} \smx) (r_{j+1}) & = & f_j\bar{f_j} \smx (f_j(a_{j+1}))\nonumber \\
& = & f_j(\bar{f_j}\bar{f_j}^{-1}) \smx (a_{j+1}) \nonumber \\
& = & f_j(a_{j+1})=r_{j+1}. \nonumber
\end{eqnarray} 
To finish the proof, we should also note that $(\tau_{r_{j+1}}\tau^2_{r_j}\cdots\tau^2_{r_1} \smx) (r_{j+1}) = r_{j+1}$.
\end{proof}

The proof above  shows also that the composition of $\smx$ with $\tau_{r_j}$'s is always a real structure. In fact,  for every $1\leq j<w$, (without writing $\sigma_j$'s)
\begin{eqnarray}
(\tau^2_{r_j}\cdots\tau^2_{r_1} \smx) \cdot(\tau^2_{r_j}    \cdots \tau^2_{r_1} \smx)  & = &  (f_j\bar{f_j} \smx)\cdot( f_j\bar{f_j} \smx)\nonumber \\
& = & f_j \bar{f_j} \bar{f_j}^{-1} {f_j}^{-1}  \smx^2 =   \mbox{id} \nonumber
\end{eqnarray}
and similarly
$$
(\tau_{r_{j+1}}\tau^2_{r_j}\cdots\tau^2_{r_1} \smx) \cdot(\tau_{r_{j+1}}\tau^2_{r_j}\cdots\tau^2_{r_1} \smx)  = 
\tau_{r_{j+1}}\tau_{r_{j+1}}^{-1}  = \mbox{id}.
$$
Of course this fact is valid for every recursively $s$-invariant product $\tau_{r_w}^{\sigma_w}\ldots \tau_{r_1}^{\sigma_1}$. If we set $s_0 = s$
and, for $j > 0, s_j =\tau_{r_j}^{\sigma_j}\cdot s_{j-1}$, we see that $\{s_j\}_{j=0}^{j=m}$ is a sequence of real structures on $\sg$.

\noindent {\it Proof of Theorem~\ref{arda}.} 
We will follow the proof of the Lickorish-Wallace Theorem (see e.g. \cite{Ro}). Given the real 3-manifold $(M,c_M)$, consider a real Heegaard splitting of $M$. Let the Heegaard surface $H$ have genus $g$. Then we take a genus-$g$ nonseparating real splitting of $(S^3,c_{\st})$ 
(which exists for any $g>0$; see \cite{OzSa2} or \cite{Se}) 
so that the real map $s=c_{\st}|_H$ equals $\tau_{\delta_1}\ldots \tau_{\delta_g}\cdot \smx$ for some $\smx$-real disjoint curves $\delta_1,\ldots,\delta_g$.
Note that in these splittings, $s$ and $c=c_M|_H$   are gluing maps. In the usual proof of the Lickorish-Wallace Theorem, the composition $c\cdot s$ is expressed as a product of Dehn twists and then each twist is extended over  handlebodies. In our case, we express  $c\cdot s$  as a recursively $s$-invariant product and show that that factorization describes an appropriate recursively equivariant  sequence of $(\pm 1)$-surgeries. 
Now, using Lemma~\ref{ifade} and Lemma~\ref{even} we write:
$$ \ba{ll} c\cdot s & = (\tau_{\beta}^m  \tau_{b_u}\ldots \tau_{b_1}\cdot p \cdot \smx)\cdot (\tau_{\delta_1}\ldots \tau_{\delta_g}\cdot \smx) \\
& = \tau_{\beta}^m  \tau_{b_u}\ldots \tau_{b_1}\cdot p \cdot \tau_{\delta_1}^{-1}\ldots \tau_{\delta_g}^{-1}
\ea$$
where $p = \tau_{r_w}^{2\sigma_w}  \ldots \tau_{r_1}^{2\sigma_1}$ is a (possibly empty) recursively  $\smx$-invariant factorization with even powers,
$b_j$'s are in $\Fix~ (p \cdot \smx)$; $\delta_j$'s are $s$-invariant and $\smx$-real; and $\beta\in \Fix~(c)$ with $m = 1$ if $c$ is non-separating but not maximal and $m = 0$ otherwise.

We claim that the last factorization for $c\cdot s$ is recursively $s$-invariant. First since $\delta_j$'s are disjoint $s$-invariant, following the terms of Lemma~\ref{even} we have:
$$ \ba{l}
s_0=s, \; r_1=\delta_g; \\
s_1=\tau_{\delta_g}^{-1}\cdot s, \; r_2=\delta_{g-1}; \\
\vdots \\
s_{g}=\tau_{\delta_1}^{-1}\ldots \tau_{\delta_g}^{-1}\cdot       s = \smx, \; r_{g+1}=\delta_1. \
\ea $$
Next, $p$ is already recursively $\smx$-invariant.  Moving on with the remaining terms of the product we set  $s_{ g+w+1} = p \cdot \smx$ and $r_{g+w+2} = b_1$. Since $b_j$'s are disjoint and $(p\cdot\smx)$-real and $\beta$ is $c$-real the claim of recursive $s$-invariance follows.

Finally we claim that each of these steps determines
an appropriate equivariant $(\pm 1)$-surgery with respect to the framing determined by the initial surface $H$. As usual this is accomplished by {\em pushing} each curve equivariantly into the Heegaard handlebodies, as the following proposition shows, and thus  the proof of Theorem~\ref{arda} follows. 
\hfill $\Box$

\begin{prop}
Let $X$ and $Y$ be real 3-manifolds with real Heegaard splittings $(H,c)$ and $(H,s)$ respectively. Suppose $s=\tau^{\pm \sigma}_\alpha\cdot c$ where 
$\alpha$ is a $c$-invariant curve on $H$. Then a $(\mp 1/2\sigma)$-surgery in $X$ along $\alpha$, with  framing determined by $H$, followed by uniquely extending the real structure over the surgered region gives a real 3-manifold  equivariantly diffeomorphic to  $(Y,s)$.
\label{kapkun}
\end{prop}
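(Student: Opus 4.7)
The strategy is to realize the passage from the gluing $c$ to $s=\tau_\alpha^{\pm\sigma}\cdot c$ as an equivariant Dehn surgery along $\alpha$ viewed as a knot in $X$, invoking the classical Lickorish--Wallace dictionary between powers of Dehn twists supported on a curve in a Heegaard surface and rational Dehn surgeries on that curve.

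First I would thicken a regular annular neighborhood of $\alpha$ in $H$ to a solid torus $N(\alpha)\subset X$ using a bicollar of $H$; the core of $N(\alpha)$ is $\alpha$, and the boundary of the annulus provides the Heegaard framing, which we fix as the longitude $\lambda$. The classical Lickorish--Wallace argument (see for example \cite{Ro}) shows that excising $N(\alpha)$ from $X$ and re-gluing so as to implement $\tau_\alpha^{\pm\sigma}$ along $H$ produces a 3-manifold diffeomorphic to $H_1\cup_s H_2\cong Y$; tracking this modification through the meridian--longitude conventions fixed in Section~\ref{EQS} identifies the relevant surgery coefficient as $\mp 1/(2\sigma)$.

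Next I would address equivariance. Because $\alpha$ is $c$-invariant, we may choose $N(\alpha)$ to be $c$-invariant, so that it is equivariantly modeled on one of the real solid tori $(S^1\times D^2,c_i)$ listed in Section~\ref{EQS}. The entire surgery --- both the excision of $N(\alpha)$ and the re-gluing --- can therefore be performed equivariantly, and by the uniqueness of the equivariant extension established there, the real structure $c$ extends canonically to a real structure on the surgered manifold.

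The main obstacle is verifying that this extended real structure coincides with the real structure on $Y$ induced by the Heegaard gluing $s$. Away from $N(\alpha)$ the two structures agree with $c$ by construction; on $N(\alpha)$, the gluings prescribed by $s$ and $c$ differ precisely by the Dehn twist $\tau_\alpha^{\pm\sigma}$, which is supported in an annular neighborhood of $\alpha\subset H$. The check is therefore local on the re-glued model solid torus, reducing to matching the type-$c_i$ identification with the canonical extension listed in Theorem~\ref{realst}. Once this case-by-case matching is confirmed, the surgered real manifold is equivariantly diffeomorphic to $Y$ endowed with its natural real structure, as claimed.
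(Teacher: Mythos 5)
Your overall strategy is the same as the paper's: thicken an annular neighborhood of $\alpha$ in $H$ to a bicollared solid torus $N$, excise and reglue with slope $\mp 1/2\sigma$ to realize the change of Heegaard gluing from $c$ to $\tau_\alpha^{\pm\sigma}\cdot c$, and then check that the unique equivariant extension over the reglued torus reproduces the real structure of $Y$. The problem is that you stop exactly where the proof actually happens: the sentence ``once this case-by-case matching is confirmed'' is a placeholder for the entire content of the paper's argument, and that matching is not automatic. Two nontrivial facts are needed and are absent from your write-up. First, the surgery coefficient has \emph{even} denominator $2\sigma$ (the twist $\tau_\alpha^{\pm\sigma}$ on $H$ appears as $\tau_\alpha^{\pm 2\sigma}$ on $\partial T$ in the paper's symmetric bicollar model), and this evenness is precisely what makes the $c_2$ case close up: by the extension table in Section~\ref{EQS}, a surgery along a real ($c_2$-) knot extends as $c_2$ only when $q$ is even, so the resulting surgery is of type $2_2$. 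With an odd coefficient the extension would be $c_3$ or $c_4$ and the conclusion would fail. Second, when $c|_\alpha=c_3$ one must observe that the equivariant neighborhood $N$ cannot be of type $c_3$ --- a $c_3$-neighborhood of a curve on a real Heegaard surface is impossible --- so $N$ is forced to be a $c_4$-solid torus and the surgery is of type $4_3$ or $4_4$ (the two being equivariantly diffeomorphic by Remark~\ref{izotopdegil}). Your proposal does not notice either point, and without them the ``matching'' you defer cannot be carried out.

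A smaller issue: you propose to match the reglued torus against ``the canonical extension listed in Theorem~\ref{realst},'' but that theorem classifies equivariant \emph{tight contact} solid tori and is irrelevant here; this Proposition is purely topological, and the correct reference is the list of topological extensions of $c_i$ over a surgered solid torus given at the start of Section~\ref{EQS}. Finally, the paper's verification that the extended structure agrees with that of $Y$ is not purely ``local on the reglued model'': it is organized by first exhibiting the identity as an equivariant diffeomorphism $X-N\to Y-N$ (after smoothing), and only then extending over $T$; you should make that global identification explicit rather than asserting agreement ``away from $N(\alpha)$'' by construction.
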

\begin{proof}
Let $H_1$ and $H_2$ be the two handlebodies of the Heegaard splitting of  $(X,c)$
and let $X_0\subset X$ be an equivariant neighborhood of $H$ diffeomorphic to $H\times [-1,1]$. Set $X_1=\cl(H_1 - X_0)$ which we consider identical to $X_2=\cl(H_2 - X_0)$. Then $X$ is equivariantly diffeomorphic to $X_1\cup_{\id|_{\partial X_1}} X_0\cup_{c|_{\partial X_0}}\! X_2$ (here the gluing maps are from the top boundary of the first space to the bottom boundary of the next) with the real structure $\tilde{c}$ defined as 
$$\tilde{c}:\left\{ \ba{ll} X_1\ra X_2, & x\mapsto x \\
X_0\ra X_0, & x\mapsto c(x).
\ea \right.$$
Similarly, we consider such a splitting $(Y_1,Y_0,Y_2;\tilde{s})$ for $(Y,s)$. Here we consider $X_1,X_2,Y_1,Y_2$ identical and $X_0,Y_0$ identical. 

Let $\nu(\alpha)$ be an annulus neighborhood of $\alpha$ in $H$ and  $N=\nu(\alpha)\times [-1,1]$ be a  $c$-equivariant smooth neighborhood of $\alpha$ in $X_0$. Since $\alpha$ is both $c$- and $s$-invariant, with a slight abuse,  we will also denote by $N$ a small $s$-equivariant neighborhood of $\alpha$ in $Y_0$. We observe that the identity map (again with an abuse) is an equivariant homeomorphism from $X-N$ to $Y-N$, which can be made an equivariant diffeomorphism after smoothing.
Now,  with $T=S^1\times D^2$ and $\varphi$ a ($\mp 1/2\sigma$)-sloped diffeomorphism on $\partial N$, $(X-N)\cup_{\varphi} T$ is diffeomorphic to $Y$. Here the previous diffeomorphism  $\textnormal{id}_{X-N}$ extends trivially over $T$. 

What the essence in the claim of the proposition  is and what we have to show basically is that  the above extension can be performed equivariantly.
Indeed the real structure $\tau^{\pm 2\sigma}_{\alpha} c|_{\partial T}$ (here $\tau^{\pm 1}_{\alpha}$ is considered
to be a twist around a copy of $\alpha$ on $\partial T \cong_{\varphi} \partial N$) can be extended to a real structure over $T$ uniquely. In fact, \\
(i) If $c|_{\alpha}=c_1$ then   
the unique extension over $T$  up to equivariant isotopy is $c_1$.  This corresponds to a type-$1_1$ equivariant surgery (see Theorem~\ref{contactsurgery}). \\
(ii) If $c|_{\alpha}=c_2$, i.e. $\alpha$ is real, then $\tau^{\pm 2\sigma}_{\alpha}  c|_{\partial T}$ is equivariantly diffeomorphic to $c_2$ since the surgery coefficient $q=\pm 2\sigma$ is even. This corresponds to a type-$2_2$ equivariant surgery. \\
(iii) If $c|_{\alpha}=c_3$ then $c|_{\partial N}= c_3$ or $c_4$ . The former is impossible on a real
Heegaard surface. Then $\tau^{\pm 2\sigma}_{\alpha}  c|_{\partial T}$ is equivariantly
isotopic to $c_3$ or $c_4$. This is an equivariant type-$4_4$ or $4_3$  surgery. Of course the choices here are equivariantly diffeomorphic.
\end{proof}

\section{Equivariant Lickorish-Wallace theorem}
\label{ELWT}

In this section, we will give an equivariant surgery description for a real 3-manifold and prove the  Lickorish-Wallace Theorem in the equivariant setting.  In general the surgery link  provided by the  Lickorish-Wallace Theorem need not accept any kind of symmetry in $(S^3,c_{\st})$. However we show here that it is always possible to construct an invariant surgery link. 

Consider a real three-manifold $(M,c_M)$ and a $c_M$-equivariant link $L=L_1\cup\ldots\cup L_n$ in $M$ decorated with an integer $n$-tuple $\sigma=(\sigma_1,\ldots \sigma_n)\in\Z^n$. Let $(M(L),c_M(L),\sigma)$ denote the real 3-manifold resulting from $c_M$-equivariant $\sigma$-surgery along $L$, i.e. the
collection of $\sigma_i$-surgeries along $L_i$'s, $1\leq i \leq n$. (As usual the surgery coefficients are with respect to a standard or given reference framing.) We will usually write $(M(L),c_M(L))$ instead, when the surgery coefficient $\sigma$ is clearly understood or is not explicitly required.

\begin{defn}
	On a real surface $(\sg,c)$, a product $\tau_{a_t}^{\sigma_t}\ldots\tau_{a_1}^{\sigma_1} \cdot\tau_{b_k}^{\pm1}\ldots\tau_{b_1}^{\pm1}\cdot \tau_{a_1'}^{\sigma_1}\ldots\tau_{a_t'}^{\sigma_t}$ of Dehn twists is called an \emph{equivariant product} for $c$ if  $c(a_i)=a_i'$ and $b_j$ are disjoint $c$-invariant curves for all $1\leq i\leq t$ and $1\leq j\leq k$ .
\end{defn}

We start with a technical lemma.

\begin{lemma}
	Let $(M,c_M)$ and $(M',c_{M'})$ be two real manifolds with the associated real Heegaard splittings $(H,c)$ and $(H,c')$ respectively. Assume that 
$c'c=\tau_{a_t}^{u_t}\ldots\tau_{a_1}^{u_1} \cdot\tau_{b_k}^{v_k}\ldots\tau_{b_1}^{v_1}\cdot \tau_{a_1'}^{u_1}\ldots\tau_{a_t'}^{u_t}$
with all $u_i,v_j\in\{-1,+1\}$ and that this factorization is an equivariant product for $c$. Then there is an equivariant link $L$ in $(M,c_M)$ decorated with $u_i$'s and $v_j$'s such that the real 3-manifold $(M(L),c_M(L))$ (with respect to a framing 
induced by a real Heegaard surface) is equivariantly diffeomorphic to the manifold $(M',c_{M'})$.
	\label{main}
\end{lemma}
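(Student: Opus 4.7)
The strategy is to convert the palindromic factorization of $c'c$ into an equivariant surgery description by realizing each Dehn twist as a $\pm 1$-surgery on a knot in $M$, placed in a $c_M$-equivariant fashion. Fix a $c_M$-invariant bicollar $H\times[-1,1]\subset M$ on which $c_M$ acts as $(x,t)\mapsto(c(x),-t)$, with the handlebody $H_1$ corresponding to $t\geq 0$ and $H_2$ to $t\leq 0$. I will use the mechanism underlying the classical Lickorish-Wallace theorem: a $(\mp 1)$-surgery with $H$-framing on a push-off $\gamma\times\{s\}$ of a curve $\gamma\subset H$ into $H_1$ (respectively $H_2$) inserts a factor $\tau_\gamma^{\pm 1}$ into the gluing map, with its position in the composition controlled by $|s|$.

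For each $i=1,\ldots,t$ pick heights $0<t_1<\cdots<t_t<1$ and form the $c_M$-swapped pair $A_i^+:=a_i\times\{t_i\}\subset H_1$ and $A_i^-:=c_M(A_i^+)=a_i'\times\{-t_i\}\subset H_2$. For each $j=1,\ldots,k$ take $B_j:=b_j\subset H$, which is $c_M$-invariant since $b_j$ is $c$-invariant and $c_M|_H=c$. Define $L$ to be the union of all $A_i^\pm$ and $B_j$, decorated by the coefficients $\mp u_i$ on $A_i^\pm$ and $\mp v_j$ on $B_j$. Then $L$ is $c_M$-equivariant by construction, of the form $L_0\cup L_S\cup c_M(L_0)$ envisioned in Theorem~\ref{elw}. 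By the mirrored heights and ordering, the $A_i^+$-surgeries contribute the prefix $\tau_{a_t}^{u_t}\cdots\tau_{a_1}^{u_1}$ to the gluing-map change, the $A_i^-$-surgeries contribute the suffix $\tau_{a_1'}^{u_1}\cdots\tau_{a_t'}^{u_t}$, and the $B_j$-surgeries slot the middle factor $\tau_{b_k}^{v_k}\cdots\tau_{b_1}^{v_1}$ in between. The total change is exactly $c'c$, so the surgered Heegaard splitting is $H_1\cup_{c'}H_2\cong M'$. The $c_M$-equivariance of $L$ together with the case-by-case extension analysis of Section~\ref{EQS} guarantee that $c_M$ lifts canonically over each glued-in solid torus, producing the real structure $c_M'$ on the surgered manifold.

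The main obstacle I expect is handling the middle $B_j$-surgeries. A one-sided push-off of $b_j$ into $H_1$ or $H_2$ would break $c_M$-equivariance, while a two-sided symmetric push-off would realize $\tau_{b_j}^{2v_j}$ rather than the desired $\tau_{b_j}^{v_j}$. The resolution is to treat $b_j$ as a $c_M$-invariant knot on $H$ itself and perform equivariant surgery using the $H$-framing (which is $c_M$-invariant because $H$ is); the existence and uniqueness of equivariant tubular neighborhoods of type $c_2$, $c_3$ or $c_4$ (depending on how $b_j$ meets the real part of $c$) from Section~\ref{EQS} make this legitimate. Verifying that this single-component surgery produces exactly one Dehn twist in the correct middle position of the palindromic product, so that the composition reassembles $c'c$ on the nose, is the central technical step.
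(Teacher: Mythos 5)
Your proposal is correct and follows essentially the same route as the paper: swapped pairs of push-offs of $a_i$, $a_i'$ at mirrored heights in a $c_M$-invariant bicollar, the invariant curves $b_j$ placed on the middle surface $H$ itself with $H$-framing, and the Lickorish--Wallace twist-insertion mechanism. The ``central technical step'' you flag (that the surgeries reassemble $c'c$ in the right order and that the extended real structure is $c_{M'}$) is exactly what the paper carries out, via an intermediate layered model $\tilde{M}'$ and two explicit equivariant diffeomorphisms.
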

\begin{proof}
Topologically, the manifold $M'$ can be obtained from $M$ following Lickorish and Wallace. Let $M=U_1\cup_c U_2$ and $M'=U_1\cup_{c'} U_2$ where $U_1$ and $U_2$ are two genus-$g$ handlebodies bounded by $H$. 
Instead, we consider a splitting of $(M,c_M)$, similar to the proof of Proposition~\ref{kapkun} but with more intermediate blocks, as follows. Take an equivariant neighborhood $H\times[-1,1]$  of the Heegaard surface $H$ with $c_M$ fibered in the sense that $c_M|_{H\times\{t\}}=c$. Keeping this model in mind, set  $H_i=H\times [i,i+1]$, $-t-1\leq i \leq t$. Then  $(M,c_M)$ is equivariantly diffeomorphic to the manifold  
$$U_1\cup_{\id} H_{-t-1}\cup_{\id}\ldots\cup_{\id} H_{-2}\cup_{\id} H\times[-1,1] \cup_{\id} H_{1}\cup_{\id} \ldots\cup_{\id} H_t \cup _c U_2$$
(as in the proof of Proposition~\ref{kapkun}, here and in every similar notation below  the gluing maps are to be understood from the {\em top} boundary of the previous space to the {\em lower} boundary of the next)
	with the real structure defined as$$\tilde{c}_{M}=
	\begin{cases}
	\id: U_1\to U_2, \\ 
	c_M: H_i \to H_{-i-1}. 
	\end{cases}$$ 
(See Figure~\ref{model}, left.) We will use this splitting and real structure for $(M,c_M)$, and write $c_M$ for  $\tilde{c}_{M}$, abusing the notation

We push the curves  $a_i$ to the knots ${K}_i$ on the surface $h_{-i}$ (which is the {\em lower} boundary of $H_{-i}$; see Figure~\ref{model}), the curves $a_i'$ to  $\overline{K}_i$ on  $h_{i}$ (lower boundary of $H_{i}$)  and the curves $b_j$ determine knots $C_j$ on $h_0$ (which are mutually disjoint by assumption), so that we get the surgery link 
$$(L,\sigma)= (\overline{K}_t,u_t)\cup\ldots \cup(\overline{K}_1,u_1)\cup (C_k,v_k)\cup\ldots\cup(C_1,v_1)\cup\ldots\cup(K_t,u_t)$$
for the manifold $M'$. The surgery coefficients here are with respect to the associated surface $h_i$. Note that $L$ is a ${c_M}$-equivariant link, as the knot pair $K_i$ and $\overline{K}_i$ are swapped by $\tilde{c}_{M}|_H={c_M}|_H=c$  for each $1\leq i\leq t$ and $C_j$ is a $c$-invariant knot for all $1\leq j\leq k$. 

We will prove in two steps that the resulting real manifold $(M(L),c_{M}(L))$ is not only  diffeomorphic but also equivariantly diffeomorphic  to $(M',c_{M'})$.
First we will show that $(M(L),c_M(L))$ is equivariantly diffeomorphic to  the intermediate real manifold $(E,e)$ where 
	$$E=
	U_1\cup_{id} H_{-t-1} \cup_{\tau_{a_t}^{u_t}}\ldots\cup_{\tau_{a_1}^{u_1}} H_{-1},0]\cup_{\tau_{b_k}^{v_k}\dots\tau_{b_1}^{v_1}}H_0 \cup_{\tau_{a'_1}^{u_1}}\ldots\cup_{\tau_{a'_t}^{u_t}}H_t \cup _c U_2
	$$
with the real structure
$$e=
	\begin{cases}
	\id: U_1\to U_2, \\ 
	c_M: H_i \to H_{-i-1} \mbox{ (fibered as before)}. 
	\end{cases}
$$ 
\begin{figure}[h]
\centering
\input{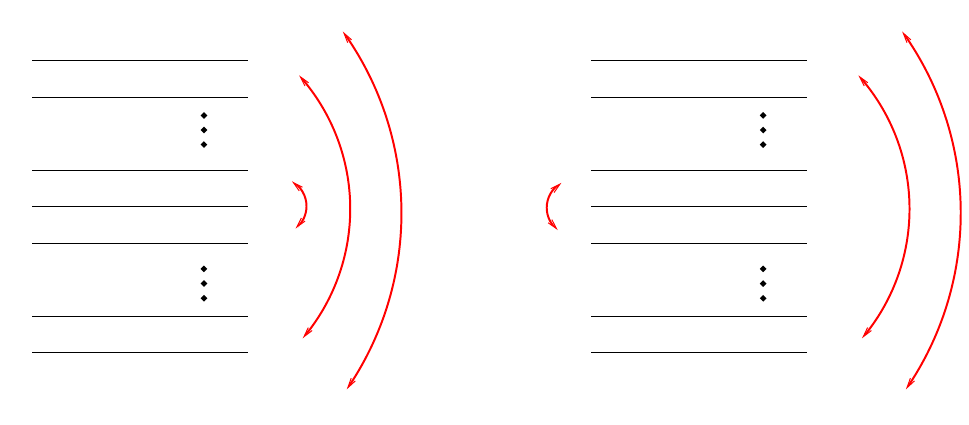_t}
\caption{The real splittings associated to $(M,c_M)$ (left) and $(E,e)$ (right). Gluing maps are shown on the right of the surfaces, real structures are shown as --red-- thick lines.}
\label{model} 
\end{figure}
(See Figure~\ref{model}, right.)
For the sake of simplicity of the demonstration, let us take $k=t=1$ and drop the indices and powers. Then  we have a single invariant knot $b$ lying on the surface $h_0$, and a pair of knots $(a,a')$ swapped by the real structure in both manifolds. 
The general case where there are more curves is handled similarly.
Now, take an equivariant neighborhood (a solid torus) $V_0$ of the curve $b$, by first taking an equivariant annular neighborhood of $b$ on $h_0$, product with a small interval $I$ so that the neighborhood remains equivariant and between but away from the surfaces $h_1$ and $h_{-1}$. Similarly take an equivariant pair of  neighborhoods in $M$ as follows: neighborhood $V'$ of $a'$ between $h_2$ and $h_0$, and $V$ of $a$ between $h_0$ and $h_{-2}$. 
Maybe the only essential observation here is that since $a$ is invariant under $\tau_a$, $\id(V)$ is a neighborhood of $a$ in $ E$ too (similarly for $a'$ and $b$).  Between the complements of the interiors of these solid tori in $M$ and $E$, there is the equivariant diffeomorphism
  $$ \id : M-(\mathring{V'}\cup\mathring{V_0}\cup\mathring{V})\to E -(\mathring{V'}\cup\mathring{V_0}\cup\mathring{V});$$
here $\mathring{V}$ denotes the interior of $V$.

For $V$ in $M$, take a meridian  $\mu$ and a longitude  $\lambda$ that is a copy of $a$; then  $V$ in $E$ has meridian  $\mu\pm\lambda$. Similarly for $V_0$ and $V'$. Then  we excise these solid tori from $M$ and glue back identical solid tori $T,T_0,T'$  respectively. The gluing maps are  $(\pm1)$-sloped boundary diffeomorphisms $\psi,\psi'$ (i.e. $\mu\mapsto \mu\pm\lambda$) for $V$ and $V'$, while for $T_0$
an equivariant diffeomorphism $\psi_0: (\partial T_0,c_*) \to (\partial V_0,e|_{V_0})$ can be taken. Here $c_*=c_1$, $c_2$ or $c_4$. 
Thus one obtains $(M(L),c(L))$ where the real structure $c(L)$ is  given as:
	$$c(L)=
	\begin{cases}
	c_M: M-\mathring{V'}- \mathring{V_0}-\mathring{V} \to M-\mathring{V'}- \mathring{V_0}-\mathring{V},  \\ 
	\id: T\to T', \\ 
	c_*: T_0\to T_0,  \\ 
	\end{cases}$$ 
	
Now we define a diffeomorphism $F$ from $M_L$ to $E$ as follows:
$$F=
	\begin{cases}
	\id: M-\mathring{V'}- \mathring{V_0}-\mathring{V} \to E-\mathring{V'}- \mathring{V_0}-\mathring{V},  \\ 
	\psi': T'\to V',  \\
	\psi_0: T_0\to V_0, \\ 
	\psi: T\to V. 
	\end{cases}$$
It is straightforward to check that $F\circ c(L)=e\circ F$ so that $F$ is an equivariant diffeomorphism.

In the second step, we prove that $(E,e)$ is equivariantly diffeomorphic to $(M', c_{M'})$. Consider the following splitting of $(M', c_{M'})$:
	$$U_1\cup_{\id} H_{-t-1} \cup_{\id}\ldots\cup_{\id} H_{-1} \cup_{c'}H_0 \cup_{\id}\ldots\cup_{\id}H_t \cup _{\id} U_2	$$
with the real structure 
	$$c_{M'}=
	\begin{cases}
	\id: U_1\to U_2,  \\ 
	\id: H_i \to H_{-i-1}. 
	\end{cases}$$
\begin{figure}[b]
\centering
\input{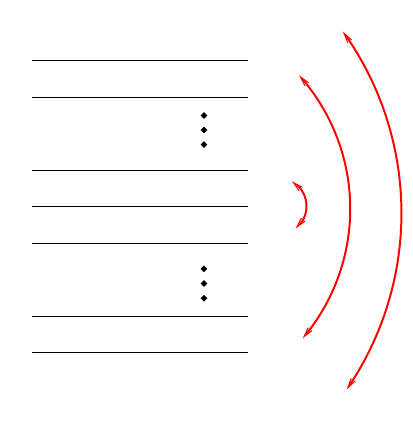_t}
\caption{The splitting associated with $(M', c_{M'})$.}
\label{fig:sfig1} 
\end{figure}
(See Figure~\ref{fig:sfig1}.)	
For each $i$ define the functions $f_i$ and $\overline{f_{i}}$ as follows:
\begin{flalign*}
& f_i = \left(c \cdot \tau_{a_t'}^{-u_t}\dots\tau_{a_{i+1}'}^{-u_{i+1}} \right) \times \id : H_i\to H_i \\
& \overline{f_{i}}=\left(\tau_{a_t}^{u_t}\dots\tau^{u_{i+1}}_{a_{i+1}}\right)\times \id:      H_{-i-1}\to H_{-i-1}
\end{flalign*}
Now define a map from $ (E,e)$ to $(M', c_{M'})$ using the above functions:	
	$$F=
	\begin{cases}
	\id: U_1\to U_1, \\ 
	\id: U_2\to U_2, \\ 
	f_i: H_i \to H_i, \\ 
	\overline{f_i}: H_{-i-1} \to H_{-i-1}, 
	\end{cases}$$
It is straightforward to check that $F$ is a well-defined diffeomorphism from $E$ to $M'$ on the boundaries of the thickened surfaces and handlebodies. We do that just for $h_0$:
$$
F^{-1}c'F|_{h_0\subset \partial H_0} = \overline{f_0}^{-1}c'f_0
= \tau_{a_1}^{-u_1}\dots\tau_{a_t}^{-u_t} c' c \tau_{a_t'}^{-u_t}\dots\tau_{a_{1}'}^{-u_1}
= \tau_{b_k}^{v_k}\dots\tau_{b_1}^{v_1}.
$$
Finally $F$ is an equivariant diffeomorphism: 
$$
F e  F^{-1}=
\left.	\begin{cases}
	\id: U_1\to U_2 \\
	 \overline{f_i}^{-1} c_M f_i^{-1}=\id: H_i \to H_{-i-1}  
	\end{cases} \right\}= c_{M'}
$$
\end{proof}

\noindent	{\em Proof of Theorem~\ref{elw}. }
Let $(M,c_M)$ be a real 3-manifold with  a real Heegaard splitting $(H,c)$ and fix a real Heegaard splitting $(H,s)$ of  $(S^3,\cst)$ with the same surface $H$ ($s$ is to be determined below). The above lemma proves in particular that in case $c \cdot s$ has a 
factorization which is an equivariant product of Dehn twists,  one  gets an equivariant surgery description of a real 3-manifold $M$ starting from $(S^3,\cst)$. So to prove the theorem it suffices  to show that $c  \cdot s$ has a factorization which is an equivariant product.
Indeed first assume that  the real part of $M$ is non-empty and 
consists of the circles $r_0,\ldots,r_k$ in $H$. The real structure $c'=\tau_{r_1}\ldots\tau_{r_k}\cdot c$ on $H$ has the real part $r_0$ and is non-separating if and only if either $k\geq1$  or $k=0$ and $c$ is non-separating. We know that there is a real Heegaard splitting $(H, s)$ of $(S^3,\cst)$ with a non-separating real part $r\subset H$. Furthermore in case $k=0$ but $c$ is separating,  $(S^3,\cst)$ can be assigned a real Heegaard splitting $(H, s)$ with separating $s$ (e.g. see Figure~\ref{separatingob}). By the classification of real structures on compact surfaces, $c'$ and $s$ are conjugate by an orientation preserving diffeomorphism $f$. Set $b_i=f(r_i)$ and note that $b_0=r$. Then we have
		\begin{align*}
s &= f^{-1}\cdot c'\cdot f \\
&= f^{-1}\cdot \tau_{r_1}\ldots\tau_{r_k}\cdot c\cdot f \\
&= 		( f^{-1}\cdot \tau_{r_1}\cdot  f )\cdot f^{-1}\ldots  f\cdot (f^{-1}\cdot \tau_{r_k}\cdot  f)\cdot f^{-1} \cdot c\cdot f \\
&= 		\tau_{b_1}\ldots\tau_{b_k}\cdot f^{-1}\cdot c\cdot f \\
		\end{align*}
		The curves $b_1,\ldots,b_{k}$ on $H$ are now $c_4$-invariant with respect to $s$.		 Let $f=\tau^{u_t}_{a_t}\ldots\tau^{u_1}_{a_1}$ be an expression for $f$ in terms of generators of the mapping class group of $H$. Then 
		\begin{align*}
		c=&f^{-1}\cdot \tau_{b_k}^{-1}\ldots\tau_{b_1}^{-1}\cdot s\cdot f\\
		=&\tau^{u_t}_{a_t}\ldots\tau^{u_1}_{a_1} \cdot\tau_{b_k}^{-1}\ldots\tau_{b_1}^{-1}\cdot s\cdot \tau^{-u_1}_{a_1}\ldots\tau^{-u_t}_{a_t}\\
		=&\tau^{u_t}_{a_t}\ldots\tau^{u_1}_{a_1}  \cdot\tau_{b_k}^{-1}\ldots\tau_{b_1}^{-1}\cdot (s\cdot \tau^{-u_1}_{a_1}\cdot s^{-1})\cdot s\ldots s^{-1}\cdot (s\cdot\tau^{-u_t}_{a_t}\cdot s^{-1})\cdot s\\
		=&\tau^{u_t}_{a_t}\ldots\tau^{u_1}_{a_1} \cdot\tau_{b_k}^{-1}\ldots\tau_{b_1}^{-1}\cdot \tau^{u_1}_{s(a_1)}\ldots\tau^{u_t}_{s(a_t)}\cdot s.
		\end{align*}
The factorization for $c\cdot s$ that appears in the last line above is an equivariant product for $s$.
We complete the proof using the Lemma \ref{main} and its proof to get an equivariant link of the form $\mL= L\cup L_S\cup \overline{L}$ so that $L_S$ is an equivariant unlink consisting of $c_4$-knots and $\cst(L)=\overline{L}$. 
	
For the case where the real manifold $(M,c_M)$ has empty real part, by 
\cite[Lemma 4.3.1]{SaT}, $\tau_r\cdot s$ is a real structure with empty real part. We follow exactly the same steps and end up with an equivariant link $\mL= L\cup L_S\cup \overline{L}$ so that $L_S$ is a $c_2$-knot (the only real knot) in $S^3$ and $\cst(L)=\overline{L}$.
\hfill $\Box$

Let us note the crucial issue that the surgery along the $c_4$-knots in $L_S$ is of type~$4_2$ (purely by the construction of the proof). In other words, the real structure $c_*$ that appears in the expression of the equivariant diffeomorhism $\psi_0$ in the proof of Lemma~\ref{main} is $c_2$.

\section{Three-manifolds with real contact structures}
\label{3MRCS}
The main result of this section is Theorem~\ref{emartinet} that asserts that every real 3-manifold admits a real contact structure. In order to prove that, we simply show that the equivariant surgeries in the previous section can be realized in the contact setting. 
The proof displays a constructive algorithm.
Before that we prove two technical lemmata.

The first one is a Legendrian realization principle in the real setting. Recall that the  Legendrian realization principle Theorem~3.7 in \cite{Ho} gives a criterion for making a collection $C$ of non-isolating curves and arcs on a convex surface Legendrian after a perturbation of the convex surface. (A set $C$ of disjoint curves is said to be non-isolating if $C$ is transverse to $\Gamma_S$, every arc in $C$ begins and ends on $\Gamma_S$, and every component of $S- C$ has nonempty intersection with $\Gamma_S$.)
We are going to prove a weaker version in the real setting, for an invariant set of curves on an anti-symmetric convex surface, imposing an additional condition on the set $C$. 

\begin{lemma}
Let $S$ be an anti-symmetric closed convex surface in $(M, \xi, c_M)$ so that $c=c_M|_S$ is an orientation reversing involution and $\Gamma_S$ is an anti-symmetric oriented dividing set. Let $C$ be a non-isolating $c$-invariant set of curves.
Assume  $S-(\Gamma_S \cup C\cup\Fix~(c))$ consists only of disjoint components $S_1,S'_1,\ldots S_k,S'_k$ with $c(S_i)=S'_i$ for $1\leq i\leq k$. Then there exists an equivariant isotopy $\phi_s$  $(s \in [0, 1])$ of $S$: \\
(1) $\phi_0 = \id$;\\
(2) $\phi_s(S)$ is convex;\\
(3) $\phi_1({\Gamma_S}) = \Gamma_{\phi_1(S)}$;\\
(4) $\phi_1(C)$ is Legendrian and $(\phi_1\circ c\circ \phi_1^{-1})$-invariant.
\label{lerp}
\end{lemma}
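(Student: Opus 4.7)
The plan is to follow the standard proof of the Legendrian Realization Principle (Theorem~3.7 of \cite{Ho}) while making every choice equivariant with respect to $c = c_M|_S$. Recall that the non-equivariant proof has two stages: construct a Morse--Smale characteristic foliation $\mathcal{F}$ on $S$ that is divided by $\Gamma_S$ and contains each curve of $C$ as a union of Legendrian ruling leaves; then apply Giroux flexibility to find a $C^\infty$-small isotopy of $S$ through convex surfaces whose terminal characteristic foliation is exactly $\mathcal{F}$. The curves of $C$ become Legendrian on the isotoped surface.

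For the first stage, I would build $\mathcal{F}$ equivariantly in pieces. Near $\Gamma_S$ the standard local model of a divided foliation admits an obvious $c$-invariant form, because $\Gamma_S$ is anti-symmetric and the transverse orientation is reversed by $c$. Near $\Fix(c)$ take a $c$-equivariant tubular neighbourhood in which $c$ acts by normal reflection, and pick a foliation invariant under that reflection; since $\Fix(c)$ is disjoint from $\Gamma_S\cup C$ by hypothesis, this choice is independent of the previous one. On each complementary region $S_i$ the usual non-isolating argument produces a Morse--Smale foliation adapted to $\Gamma_S\cap\partial S_i$ and with $C\cap S_i$ as ruling curves; transport this foliation to $S'_i = c(S_i)$ by $c$. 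The paired-components hypothesis guarantees $S_i$ and $S'_i$ are distinct, so no consistency condition arises on their interiors, and the assembled foliation $\mathcal{F}$ is $c$-invariant on all of $S$.

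For the second stage, observe that $\xi|_S$ is itself $c$-invariant as a singular directional field, because $(c_M)_*\xi = -\xi$ preserves $\xi$ as an unoriented plane field and hence preserves $\xi\cap TS$. Thus both $\xi|_S$ and $\mathcal{F}$ are $c$-invariant, Morse--Smale and divided by $\Gamma_S$. The idea is to run Giroux's flexibility argument over a closed fundamental domain $D\subset S$ for the $c$-action (one representative $\overline{S_i}$ from each complementary pair, attached to an equivariant collar of $\Gamma_S\cup\Fix(c)$); this yields a contact vector field $X_s$ defined near $D$ generating an isotopy that carries $\xi|_S$ to $\mathcal{F}$ there and fixes $\Gamma_S$ pointwise. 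By the equivariant local models of stage one, $X_s$ can be arranged to vanish on a neighbourhood of $\partial D\cap(\Gamma_S\cup\Fix(c))$ in $S$. Extend $X_s$ to a neighbourhood of $c(D)$ by $c_M$-conjugation; the two pieces match smoothly since $X_s$ was zero near the overlap. The resulting $c_M$-equivariant vector field $\tilde X_s$ is still a contact vector field, because the pushforward of a contact vector field by $c_M$ is again contact (a direct Lie derivative computation using $c_M^*\alpha = -\alpha$ and $c_M^2 = \id$), and its flow is the required equivariant isotopy $\phi_s$.

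The main obstacle will be arranging Giroux's construction on the fundamental domain so that the generating vector field $X_s$ really does vanish near $\Gamma_S\cup\Fix(c)$, with local normal forms chosen compatibly so that the $c_M$-extension is smooth. Once these equivariant local models are fixed and $X_s$ is tuned accordingly, the extension across $\partial D$ is formal. The paired-components hypothesis is used precisely to rule out $c$-invariant complementary regions, which would otherwise force additional internal symmetry on the Morse--Smale foliation and could obstruct making $C\cap S_i$ into ruling curves.
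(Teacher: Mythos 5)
Your overall architecture matches the paper's: build the divided singular foliation equivariantly by constructing it on one component of each $c$-paired region and transporting it by $c$, then conclude with an equivariant flexibility step. However, there is a genuine error in your treatment of $\Fix(c)$. You assert that ``$\Fix(c)$ is disjoint from $\Gamma_S\cup C$ by hypothesis,'' and you use this to decouple the local model near $\Fix(c)$ from the one near $\Gamma_S$. No such hypothesis is present, and in fact the opposite holds: because $\Gamma_S$ is \emph{anti-symmetric}, $\Fix(c)$ must intersect $\Gamma_S$ transversally and nontrivially. This intersection is not a nuisance to be avoided --- it is the crux of the argument. The foliation-building step on each complementary piece requires that every component of $S-(\Gamma_S\cup C\cup\Fix(c))$ meet $\Gamma_S$, i.e.\ that the enlarged collection $C\cup\Fix(c)$ is still non-isolating. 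You never verify this, and cutting along the extra closed curves $\Fix(c)$ could a priori isolate a region; it is precisely the forced transverse intersection of $\Fix(c)$ with $\Gamma_S$ that rules this out. The correct way to handle $\Fix(c)$ is to note that it is already Legendrian (by definition of a real contact manifold), so the components of $S-(\Gamma_S\cup C\cup\Fix(c))$ can be treated as regions with Legendrian boundary in Honda's construction; one does not get to choose a fresh foliation near $\Fix(c)$ independently of $\Gamma_S$.

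On the second stage, you re-derive an equivariant version of Giroux flexibility by cutting off a contact vector field on a fundamental domain and extending by conjugation. This is unnecessary --- a real version of Giroux's Flexibility Theorem is already available (\cite{OzSa1}, Proposition~3.4) and is what the paper invokes --- and as written it is also delicate: forcing the generating vector field to vanish on a neighbourhood of $\Gamma_S\cup\Fix(c)$ freezes the characteristic foliation there, which is only legitimate if your model foliation already agrees with $\xi|_S$ on that neighbourhood, a compatibility you would still have to arrange. Citing the existing equivariant flexibility result removes both the redundancy and this gluing subtlety. Your remark that the paired-components hypothesis eliminates $c$-invariant complementary regions (which is needed when $c$ is non-separating) is correct and consistent with the paper.
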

\begin{proof}
In the original proof of Legendrian realization principle, one constructs a singular foliation $F$ containing the set $C$ (or an isotopic copy) which is also divided by $\Gamma_S$. Then using the Flexibility Theorem, this foliation can be made a characteristic foliation for $\Gamma$ and since the leaves of the characteristic foliation are Legendrian, so is the set $C$.  For a detailed  proof see \cite{Ho} or \cite{Et}.

We will not reproduce the proof here but instead we will point out how one can modify it equivariantly. The singular foliation $F$ is constructed around a boundary collar of each component of $S-(\Gamma_S \cup C)$ and then is extended to the interior. The assumption that $C$ is non-isolating makes it possible to extend $C$ to a singular foliation. 

In our case, we consider the set $C\cup \Fix~(c)$. $\Gamma_S$ and $\Fix~(c)$ intersect transversally and nontrivially since $\Gamma_S$ is anti-symmetric, which guarantees that $C\cup \Fix~(c)$ is non-isolating provided $C$ is non-isolating. Since $\Fix~(c)$ is Legendrian by definition, the components of  $S-(\Gamma_S \cup C\cup\Fix~(c))$ may be treated as if they are components of $S-(\Gamma_S \cup C)$ with Legendrian boundary $\Fix~(c)$. Now we can construct a singular foliation on each piece $S_i$ in exactly the same way as in the original proof, then extend it to $S'_i$ equivariantly. Then we use the real version of Giroux's Flexibility Theorem \cite[Proposition~3.4]{OzSa1} to conclude.
\end{proof}
Note that if $c$ is separating, then $S-(\Gamma_S \cup C\cup\Fix~(c))$ already consists of equivariant pair of components. The assumption is needed when $c$ is non-separating.

The last assumption of the lemma is indispensable. Indeed we are going to show in the next section that there are cases where the assumption does not hold and  equivariant Legendrian realization is impossible (see Remark~\ref{yok}).

The second lemma is about convex Heegaard splittings of the real tight $S^3$, similar to the observations in \cite[Section~2.1]{OzSa2}.
 
\begin{lemma}
For any genus $g\geq 1$, $(S^3,\xi_{\st},\cst)$ has a real Heegaard splitting where the Heegaard surface $H$ is convex. Moreover, there are $g$ disjoint $c_4$-circles $m_1,\ldots,m_g$ on $H$ such that $|m_i\cap \Gamma|=4$ where $\Gamma $ is the dividing set of $H$.
\label{convex}
\end{lemma}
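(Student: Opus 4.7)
\begin{pf}
The plan is induction on $g$, building an explicit convex real Heegaard splitting of $(S^3,\xi_{std},\cst)$ together with the required $c_4$-curves.

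For the base case $g=1$, I would start from a non-separating genus-one real Heegaard splitting $(H,s)$ of $(S^3,\cst)$, as constructed in \cite{OzSa2} (or \cite{Se}), where $s=\tau_{\delta_1}\cdot \smx$ with $\smx$ the maximal (separating, two fixed-circle) real structure on $T^2$ and $\delta_1$ one of its fixed circles. The key observation is that $\delta_1$ is automatically a $c_4$-curve in $(S^3,\cst)$ with respect to $s$: since $\smx$ fixes $\delta_1$ pointwise and reverses its normal in $H$, while the Dehn twist $\tau_{\delta_1}$ shifts the core $\delta_1$ by one half and preserves its normal, the involution $s$ acts on an annular collar of $\delta_1$ by $(y,u)\mapsto (y+\tfrac{1}{2},-u)$, which is precisely the boundary model of the $c_4$ solid torus $(y,(x,t))\mapsto (y+\tfrac{1}{2},(x+\tfrac{1}{2},t))$. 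Next, using the real Giroux flexibility theorem \cite[Proposition~3.4]{OzSa1}, perturb $H$ equivariantly into a convex surface; tightness of $\xi_{std}$ forces the $\cst$-anti-invariant dividing set $\Gamma$ to consist of an even number of parallel essential curves on $T^2$, and by choosing the isotopy class of $H$ within its $\cst$-equivariant class (equivalently, by tuning the equivariant tight contact solid tori on its two sides via Theorem~\ref{realst}), one arranges the slope of $\Gamma$ to have algebraic intersection two with $[\delta_1]$. This gives $|\delta_1\cap\Gamma|=4$; setting $m_1=\delta_1$ and applying Lemma~\ref{lerp} to put $m_1$ transverse to $\Gamma$ completes the base case.

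For the inductive step, given $(H_g,\Gamma_g;m_1,\ldots,m_g)$ satisfying the conclusion, I would perform an equivariant contact stabilization: choose disks $D,D'$ on $H_g$ with $\cst(D)=D'$, both disjoint from $\Gamma_g\cup \bigcup_i m_i$, and attach an equivariant contact $1$-handle to one of the two handlebodies, extending $\cst$ so that it still swaps them. The new Heegaard surface $H_{g+1}$ is $\cst$-invariant, convex after a small equivariant perturbation, and its dividing set $\Gamma_{g+1}$ is obtained from $\Gamma_g$ by inserting a pair of parallel arcs across the annulus of the handle (the local standard model). The meridional circle of the $1$-handle is invariant with free $\cst$-action; by the same local calculation as in the base case it is a $c_4$-circle, and it meets the freshly introduced pair of arcs in exactly four points. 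Taking $m_{g+1}$ to be this meridian, it is disjoint from the $m_i$'s, and each $|m_i\cap\Gamma_{g+1}|=4$ for $i\leq g$ is unchanged since the modification of $\Gamma$ is entirely local, closing the induction.

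The most delicate step, and the main expected obstacle, is the base case: producing a convex real Heegaard torus in $(S^3,\xi_{std},\cst)$ whose dividing-set slope realizes algebraic intersection two with $[\delta_1]$. This reduces to choosing, on each side of $H$, a $\cst$-equivariant tight contact solid torus of boundary slope of the form $1/(2k)$ as permitted by Theorem~\ref{realst}, and then checking that the resulting closed contact $3$-manifold is indeed equivariantly contactomorphic to $(S^3,\xi_{std},\cst)$; this last verification uses the classification of tight contact structures on lens spaces together with the uniqueness of the standard real structure on $S^3$.
\end{pf}
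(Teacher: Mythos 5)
Your high-level strategy (build the splitting genus by genus via an equivariant stabilization) is in the same spirit as the paper's proof, which performs $g$ real positive stabilizations of type~II on the disk open book of $(S^3,\cst,\xi_{std})$ and takes $H=S_-\cup S_+$, so that convexity and the dividing set come for free (the dividing set is the binding) and the $c_4$-curves are read off the page. However, as written your argument has two genuine gaps.

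First, the inductive step. In a real Heegaard splitting the involution $\cst$ acts on $H$ in an orientation-reversing way and therefore \emph{swaps} the two handlebodies. A $1$-handle attached ``to one of the two handlebodies'' along $D\cup D'$ with $\cst(D)=D'$ has its tube contained in, say, $U_1$, while $\cst$ carries that tube into $U_2$; the resulting surface $H_{g+1}$ is not $\cst$-invariant, so there is no ``extending $\cst$ so that it still swaps them.'' The stabilization must be performed symmetrically with respect to $H_g$ — this is exactly what Hopf plumbing (real positive stabilization of the supporting open book) achieves, by modifying both pages $S_\pm$ compatibly. Relatedly, since $\cst$ exchanges the two feet $D$ and $D'$, the claim that the meridian of your handle is an \emph{invariant} circle with free action (hence a $c_4$-circle) does not follow from a purely local computation at one foot; it has to be checked for the symmetric model.

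Second, the base case. Making the genus-one Heegaard torus convex \emph{equivariantly} and then ``tuning'' the slope of $\Gamma$ so that $|\delta_1\cap\Gamma|=4$ is precisely the hard point, and your proposed fix is circular: Theorem~\ref{realst} supplies equivariant tight solid tori of prescribed boundary slope, but gluing two of them and verifying that the result is equivariantly contactomorphic to the standard real tight $S^3$ (not merely contactomorphic and equivariantly diffeomorphic separately) is not established anywhere in your argument. The paper avoids this entirely in Lemma~\ref{convex} by using open-book-induced Heegaard surfaces, and handles the one genuinely non-open-book genus-one case by an explicit model (Lemma~\ref{cxTorusinS3}, via the Hopf torus $r_1=r_2=1$ and an explicit equivariant perturbation). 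A smaller point: Lemma~\ref{lerp} is a Legendrian realization statement with a hypothesis on the complement of $\Gamma_S\cup C\cup\Fix(c)$; it is not a tool for ``putting $m_1$ transverse to $\Gamma$,'' and no Legendrian realization is needed for the statement of Lemma~\ref{convex} itself.
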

\begin{proof}
We will start with a real open book decomposition of $(S^3,\cst)$ with disk pages and by positive real stabilizations we will increase the genus of the induced Heegaard splitting. This proves the first claim as the Heegaard surface induced by an open book decomposition is convex.

Now let $(S_0=D^2, f_0=\id,c_0=\rho_0)$ be the real open book decomposition of $(S^3,\cst)$ where $\rho_0$ is the reflection with respect to the $y$-axis. After a real positive stabilization of type~II, the resulting real pages are annuli with the real structure shown in Figure \ref{g1}.

\begin{figure}[h]
\centering
\resizebox{11cm}{!}
{\input{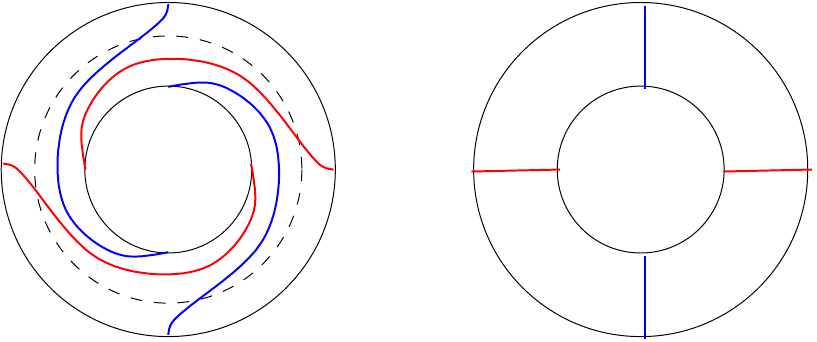_t}}
\caption{The real pages after a real positive stabilization of type~II; left: $(S_-,c_-)$ and right: $(S_+,c_+)$}
\label{g1}
\end{figure}

Performing real positive stabilizations of type~II $g$ times successively, the real pages $(S_-,c_-)$ and $(S_+,c_+)$ become disks with $g$ holes with $c_-=\rho_g \tau_{a_1} \dots \tau_{a_g}$ and $c_+=\rho_g$ where $\rho_g$ is the reflection with $g+1$ real arcs; see  Figure \ref{g+}.

\begin{figure}[h]
\centering
\resizebox{11cm}{!}
{\input{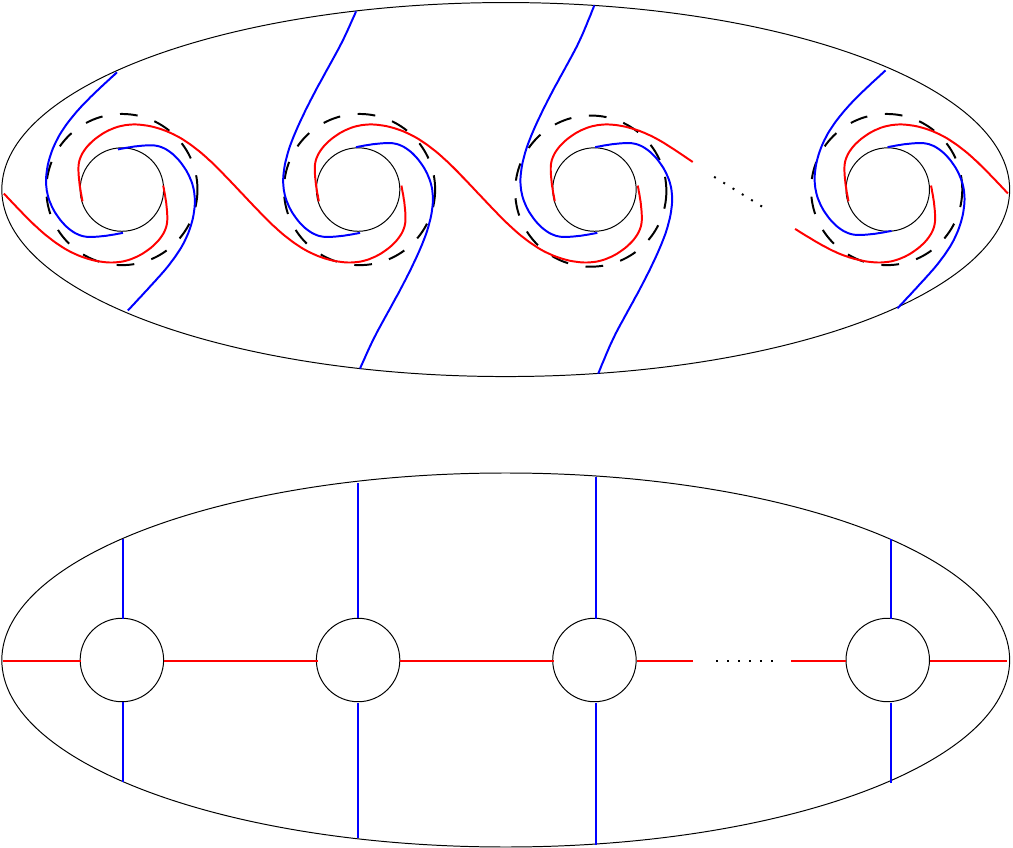_t}}
\caption{The real pages $(S_-,c_-)$ (top) and $(S_+,c_+)$ (bottom) for $(S^3,\cst)$. The red arcs are real, the blue arcs are $\cst$-invariant.}
\label{g+}
\end{figure}

We obtain a real Heegaard splitting with convex surface $H=S_-\cup S_+$ of genus $g$. The dividing set is the binding  and the gluing map is $s= c_- \cup c_+$. The blue curves shown in Figure \ref{g+} are $c_4$-curves with respect to $s$ and satisfy the conditions of the lemma.
\end{proof}

\noindent {\em Proof of Theorem~\ref{emartinet}. }
Let $(M,c_M)$ be a real 3-manifold with the real Heegaard splitting $(H,c)$. By Theorem \ref{elw} there is an equivariant link $\mL=L\cup L_S\cup\overline{L}$ and $\sigma\in\{-1,+1\}^{n}$ in $(S^3,\cst)$ such that $(S^3_{\mL},c({\mL}),\sigma)$ is equivariantly diffeomorphic to $(M,c_M)$. Here the link $L_S$ consists of unlinked $\cst$-invariant knots  and $L$ and $\overline{L}$ are swapped by $\cst$. To turn this equivariant surgery into an equivariant contact surgery, we must put $L_S$ into an equivariant Legendrian position so that the components of $L_S$ satisfy the  hypotheses
of Theorem~\ref{contactsurgery}, summarized in Table~\ref{tab:extensions}. For $L$ and $\overline{L}$, it is enough to make them Legendrian respecting the equivariance.
 
Using the set up  in the proof of Theorem \ref{elw}, let $\Fix~(c)$ consist of $k+1$ real circles ${r_0,\ldots,r_k}$. Consider $(S^3,\cst)$ with the real contact Heegaard splitting $(H,s)$ of genus $k$ induced by the real open book decomposition given in Lemma \ref{convex}. Let $r\subset H$ denote the real circle of $s$. There are $k$ disjoint $c_4$-circles (with respect to $s$) ${m_1,\ldots m_k}$ on $H$ by Lemma~\ref{convex}. Then $c'=\tau_{m_1}^{-1}\ldots\tau_{m_k}^{-1}\cdot s$ is a real structure with $\Fix~(c') =\{r, m_1,\ldots, m_k\}$. Now that $c$ and $c'$ are conjugate, there is an orientation preserving diffeomorphism $f$ of $H$ given by a product $\tau^{\sigma_1}_{a_1}\ldots\tau^{\sigma_t}_{a_t}$ such that $c=f\circ c'\circ f^{-1}$. Following the steps in the proof of Lemma \ref{main}, we get an equivariant surgery link $L\cup L_S\cup\overline{L}$ where $L_S$ consists of the knots ${m_1,\ldots, m_k}$ on $H$. On each $m_i$, we perform a type-$4_2$, equivariant $(-1)$-surgery. Note that since $\{m_1,\ldots, m_k\}$ is a non-isolating set of curves transverse to the dividing set, we can make each $m_i$ Legendrian  with an equivariant perturbation of $H$ by Lemma \ref{lerp}. Moreover for each modified $m_i$, $\tw=-2$ since $|m_i\cap \Gamma|=4$. Hence, the condition in Theorem~\ref{contactsurgery} to perform an equivariant contact surgery on a $c_4$-knot is satisfied and we perform an equivariant $(+1)$-contact surgery on each $m_i$. 
 
Note that if $k=0$ and $r_0$ is separating, we cannot use the real Heegaard splitting given by Lemma~\ref{convex} since $r$ is non-separating there, so that $c$ and $s$ would not be conjugate. In that case we use the real Heegaard splitting induced by the real open book given in Figure~\ref{separatingob}. (See also \cite[Figure~6]{OzSa2}.) This open book is obtained  from the simplest real open book decomposition of $S^3$  with disk pages and succesively  attacing handles of type~III  $k$ times. The real circle is separating in this case and $c$ and $s$ are conjugate. The rest of the proof follows similarly as above.
\hfill $\Box$
\begin{figure}[h]
\centering
\resizebox{11cm}{!}
{\input{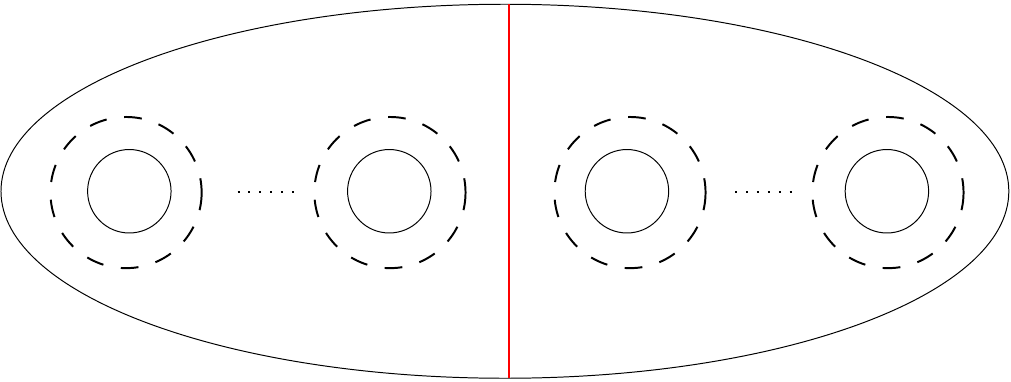_t}}
\caption{ A real open book of $S^3$ after stabilizations of type~III. The real page $(S_-,c_-)$ is depicted.}
\label{separatingob}
\end{figure}

\noindent {\em Proof of Corollary~\ref{OTinZhomo}. }
This follows immediately from Proposition~\ref{OTinS3} and the proof of Theorem~\ref{emartinet}.
Given any real integer homology sphere $(\Sigma,s)$ the proof of Theorem~\ref{emartinet} shows that one can obtain an $s$-real contact structure $\eta$ on $\Sigma$ via an equivariant contact diagram $\Gamma$ in $(S^3,\cst,\xi_{\st})$.  
Then consider the real contact 3-manifold $(\Sigma,s,\eta)\#(S^3,\cst,\xi_{p}), p\in\Z$, which is equivariantly diffeomorphic to $(\Sigma,s)$. Besides the final contact structure is overtwisted, since $\xi_p$ is. In an integer homology sphere, the set of  overtwisted contact structures up to contact isotopy  is countably infinite and they are distinguished by the $d_3$~invariant.  Again from \cite[Lemma~4.2]{DiGeSt} it follows that varying $p$ one exhausts all overtwisted structures on $(\Sigma,s)$. Each is given 
by the equivariant contact diagram $\Gamma$ adjoined with the one for $(S^3,\cst,\xi_{p})$ obtained in the proof of Proposition~\ref{OTinS3}.
\hfill $\Box$

\section{Examples}
\label{EXs}
This section is a display for the applications of  the ideas we have harvested in the previous sections. For some basic real 3-manifolds ($\s$ and lens spaces), we will give equivariant surgery descriptions and provide equivariant contact surgery diagrams in $(S^3,\cst,\xi_{\st})$. Of course, given surgery diagrams for these manifolds we could readily make them equivariant. However then we would not have any control on the real structure obtained at the end. In this section we turn surgery diagrams into equivariant ones with a careful control on the final real structure. For a collection of similar results see \cite{OnOz}.

Below we use the genus-1 real contact Heegaard decomposition  of  $(S^3,\cst)$ with the gluing map $\cst=\begin{bmatrix} 0& 1\\1 & 0\end{bmatrix}$, with abuse of notation.  The convexity of the  Heegaard surface is guaranteed by
\begin{lemma}
\label{cxTorusinS3}
(a) There is a real contact Heegaard decomposition of $(S^3,\cst,\xi_{\st})$ 
of genus~1, with the Heegaard map $\cst$. \\
(b) Moreover a parallel copy of the dividing set (which does not satisfy the hypothesis of Lemma~\ref{lerp}) can be realized in a Legendrian way on the Heegaard surface. 
\end{lemma}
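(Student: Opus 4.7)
The plan is to construct the decomposition explicitly from the Clifford torus, and then exploit the freedom in the characteristic foliation on a convex torus to place a Legendrian closed leaf parallel to the dividing set.

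For part (a), I would take $T$ to be the Clifford torus $\{|z_1|=|z_2|=1/\sqrt{2}\}\subset S^3\subset\C^2$ with angular coordinates $(\theta_1,\theta_2)\in\R^2/(2\pi\Z)^2$. It is $\cst$-invariant (with $\cst$ acting as $(\theta_1,\theta_2)\mapsto(-\theta_1,-\theta_2)$) and splits $S^3$ into the two $\cst$-invariant solid tori $V_j=\{|z_j|\ge 1/\sqrt{2}\}$. A direct computation in the $(\theta_1,\theta_2)$-basis shows that the meridian of $V_2$ is the longitude of $V_1$, so the Heegaard gluing matrix is $\begin{bmatrix}0&1\\1&0\end{bmatrix}$. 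The restriction of $\alpha_{\st}=r_1^2\,d\theta_1+r_2^2\,d\theta_2$ to $T$ equals $\tfrac12(d\theta_1+d\theta_2)$, so the characteristic foliation on $T$ is linear of slope $-1$ and $T$ is pre-Lagrangian rather than convex. I would then apply a $C^\infty$-small $\cst$-equivariant perturbation of $T$ within an invariant collar $T\times(-\varepsilon,\varepsilon)\subset S^3$, given as the graph of a $\cst$-invariant Morse function on $T$ such as $\cos(\theta_1+\theta_2)$, to obtain a nearby convex torus with dividing set $\Gamma_T$ consisting of two parallel slope-$(-1)$ curves symmetric under $\cst$. Equivariance is automatic from the symmetry of the chosen function.

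For part (b), a parallel push-off $C$ of $\Gamma_T$ is disjoint from $\Gamma_T$, hence not transverse to it, so $C$ is isolating and Lemma~\ref{lerp} does not apply. Instead I would work directly with the characteristic foliation. By the real Giroux flexibility theorem \cite[Proposition~3.4]{OzSa1} invoked in Lemma~\ref{lerp}, any $\cst|_T$-equivariant singular foliation on $T$ divided by $\Gamma_T$ can be realized as the characteristic foliation after a $\cst|_T$-equivariant $C^\infty$-small isotopy of $T$. So it suffices to build such a foliation with a closed leaf (a Legendrian divide) parallel to $\Gamma_T$. The $\cst$-invariant slope-$(-1)$ circles on $T$ are precisely $\{\theta_1+\theta_2\equiv 0\}$ and $\{\theta_1+\theta_2\equiv\pi\}$; I would place one Legendrian divide on each of the two annular components of $T\smallsetminus\Gamma_T$, choosing the two divides to coincide with these invariant circles. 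Filling in the rest of the foliation on each annular piece by a standard Morse--Smale model whose orbits flow from $\Gamma_T$ to the closed leaf, made symmetric under $\cst$ by construction, produces the required equivariant Legendrian parallel copy of $\Gamma_T$.

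The only delicate point I anticipate is bookkeeping: checking that the Morse--Smale model on each annular piece is compatible with $\cst|_T$ at its fixed points lying on the Legendrian divides, and that its combinatorial dividing set matches $\Gamma_T$. Both reduce to local equivariant normal forms together with the real Giroux flexibility of \cite{OzSa1}, so no genuinely new contact-topological input beyond what the proof of Lemma~\ref{lerp} already used should be needed.
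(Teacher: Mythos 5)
Your overall strategy---the Clifford torus as the invariant Heegaard surface, an equivariant perturbation to convexity modelled on $(T^2\times\R,\,d\theta/2+z\,d\varphi)$, and a standard characteristic foliation with Legendrian divides for part (b)---is the same as the paper's, which invokes Geiges's Examples 4.6.21 and 4.8.10 for exactly these steps. However, part (a) contains a genuine error in the choice of model for the real structure. With $\cst=\conj|_{S^3}\colon (z_1,z_2)\mapsto(\bar z_1,\bar z_2)$ the Clifford torus is indeed invariant, but the induced involution on it is $(\theta_1,\theta_2)\mapsto(-\theta_1,-\theta_2)$, which is \emph{orientation preserving} on the torus and maps each solid torus $V_j$ to itself. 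A real Heegaard decomposition ``with Heegaard map $\cst=\begin{bmatrix}0&1\\1&0\end{bmatrix}$'' in the sense of this paper requires the Heegaard surface to be anti-symmetric: the real structure must restrict to the orientation-reversing gluing involution and swap the two handlebodies (this is what Lemma~\ref{lerp} presupposes and what makes $a+b$ and $a-b$ into $c_4$- and $c_1$-knots in the later applications). Your torus with your involution is the hyperelliptic picture, not the required one, so the decomposition you produce is not the one asserted in the lemma.

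The fix is to use the equivariantly equivalent model $(z_1,z_2)\mapsto(\bar z_2,\bar z_1)$---in the paper's coordinates $(r_1,\varphi_1,r_2,\varphi_2)\mapsto(r_2,-\varphi_2,r_1,-\varphi_1)$---which swaps $V_1$ and $V_2$ and acts on the Clifford torus by $(\theta_1,\theta_2)\mapsto(-\theta_2,-\theta_1)$, an orientation-reversing involution realizing the Heegaard map. Since this corrected involution sends $\theta_1+\theta_2$ to $-(\theta_1+\theta_2)$, your perturbation function $\cos(\theta_1+\theta_2)$ and your chosen Legendrian divides $\{\theta_1+\theta_2\equiv 0\}$ and $\{\theta_1+\theta_2\equiv\pi\}$ remain invariant, so the rest of your argument, including part (b) via equivariant Giroux flexibility, goes through essentially as in the paper once the involution is corrected; the paper carries out the same computation by transporting the torus into the model $(T^2\times\R,\,d\theta/2+z\,d\varphi)$ with $\theta=\varphi_1+\varphi_2$, $\varphi=\varphi_1-\varphi_2$, $z=r_1^2-1$ and reading off the second (nonsingular) characteristic foliation there.
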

\begin{proof}
We consider the setup in \cite[Example 4.6.21]{Ge} in the real  3-sphere $\{r_1^2+r_2^2=2\}\in\R^4$ with
$$\cst(r_1,\varphi_1,r_2,\varphi_2)=(r_2,-\varphi_2,r_1,-\varphi_1).$$  
The invariant torus $T:r_1=r_2=1$ of the Hopf vector field is an embedded anti-symmetric Heegaard surface naturally, with the gluing map $\cst$. As noted in \cite{Ge},  such a torus is not convex. However it can be made convex equivariantly using the  equivariant perturbation similar to  \cite[Example 4.8.10]{Ge}. There the ambient contact space $(T^2\times \R, d\theta/2+z d\varphi)$ can be made real by defining $c'(\theta,\varphi,z)=(-\theta,\varphi,-z)$. Now an equivariant neighborhood of the anti-symmetric torus $\{z=0\}$ is equivariantly contactomorphic to an equivariant neighborhood of $T$ via the map 
$f:\theta=\varphi_1+\varphi_2,\varphi=\varphi_1-\varphi_2,z=r_1^2-1$, i.e. $f$ satisfies $f\circ c_{\st}=c' \circ f$.
Finally observe that the second characteristic foliation  in \cite[Example 4.8.10]{Ge} is $c'$-real when $p=1$ and $q=0$. In that (standard) characteristic foliation a knot parallel to the dividing set is Legendrian (see the corresponding figure in \cite{Ge}). 
\end{proof}

Now we are ready to move on towards our basic examples.

\subsection{$\s$}

We refer to the work of J.~L.~Tollefson \cite{To} for the classification of involutions on $\s$ up to conjugation by a diffeomorphism of $\s$. There are 13 classes in total and 4 of them are orientation preserving with one dimensional fixed point set. 
Hence there are 4 real structures on $\s$ with nonempty real part, up to equivalence. 
If we identify  $\s$  as $\{(\theta,(x,y,z)): x^2+y^2+z^2=1\}\subset S^1\times\R^3$ then we can express two of these real structures: \\
\noindent (1) $s_1:(\theta,(x,y,z))\mapsto (\theta,(-x,-y,z))$,\\
\noindent(2) $s_2:(\theta,(x,y,z))\mapsto (-\theta,(x,y,-z))$,\\
each of which has two real circles.

If we  consider $\s$ as  the space $[-1, 1] \times S^2$ with $(-1, (x,y,z))$ and $(1, (-x,-y,z))$ identified, then we can express the remaining two of these real structures as: \\
\noindent(3) $s_3:(t,(x,y,z))\mapsto (t,(x,-y,-z))$,\\
(4) $s_4:(t,(x,y,z))\mapsto \begin{cases}
(1-t,(-x,-y,-z)) \text{ if }  0\leq t\leq 1\\
(-1-t,(x,y,-z)) \text { if }  -1\leq t < 1
\end{cases}$,\\
each of which has one real circle.

Each of the real structures above can be associated with a real genus-1 Heegaard  splitting of $\s.$
Consider the real structures on $T=S^1\times S^1$ given by the matrices
\begin{equation}
s_1=\begin{bmatrix} -1 & 2\\0 & 1\end{bmatrix}, s_2=\begin{bmatrix} 1 & 2\\0 &-1\end{bmatrix}, s_3=\begin{bmatrix} -1 & 1\\0 & 1\end{bmatrix}, s_4=\begin{bmatrix} 1 &1\\0&-1\end{bmatrix}.
\label{s1234}
\end{equation}
The abuse of notation here relies on the fact that the 
abstract real Heegaard splitting $(T,s_i)$ for any $k\in\Z$  is associated to the real 3-manifold $(\s,s_i)$.  This can be seen by computing the quotients and the number of real components. (Any even -resp. odd- integer at the top right entry of $s_1$ or $s_2$ -resp. $s_3$ or $s_4$- would give the same result.)

Now that we have an abstract real Heegaard splitting for each real $\s$, we will demonstrate how to obtain them via equivariant surgery from the standard real 3-sphere via Theorem \ref{elw}. Below $a$ and $b$ are the curves $S^1\times \{1\}$ and  $\{1\}\times S^1$ on $T$; $\tau_a=\begin{bmatrix} 1& 1\\0 & 1\end{bmatrix}$, $\tau_b=\begin{bmatrix} 1& 0\\-1 & 1\end{bmatrix}$   are the Dehn twists along $a$ and $b$ respectively. 

\noindent (1)  The gluing involution $s_1$  satisfies $s_1=\tau^{-1}_{a+b} \cst$ where $a+b$ is a $c_4$-knot for $\cst$. Then $(\s,s_1)$ can be obtained from $(S^3,\cst)$ by an equivariant $(-1)$-surgery of type~$4_2$ on the unknot represented by $a+b$, with respect to the framing induced from the Heegaard surface $H$. 
Since the Heegaard framing minus the Seifert framing  along $a+b$  is $+1$, a $(-1)$-surgery with respect to $H$ corresponds to a topological $0$-surgery on the unknot, which gives of course $\s$.

\noindent (2)  We have $s_2=\tau_{a-b} \cst$ where $a-b$ is a $c_1$-knot for $\cst$. Then $(\s,s_2)$ can be obtained from $(S^3,\cst)$ by an equivariant $(+1)$-surgery of type~$1_1$ on the unknot represented by $a-b$, with framing with respect to  $H$. 

\noindent (3)    For $(\s,s_3)$, we have $s_3=\tau_{b}^{-1}\tau_{a}^{-1}\cst$, and $\cst(a)=b$. Hence $(\s,s_3)$ is given by an equivariant $(-1)$-surgery of type~5 on the Hopf link formed by the knot pair $(a,b)$. Here Seifert framing and Heegaard framing of $a$  coincide; similarly for $b$. 

\noindent (4)   For $(\s,s_4)$, we have $s_3=\tau_{b}\tau_{a}\cst$. Hence $(\s,s_3)$ is given by an equivariant $(+1)$-surgery of type~5 on the Hopf link formed by the knot pair $(a,b)$.

The tight contact structure on $\s$, unique up to isotopy, is given by $$\xi = ker(x d\theta + y dz - z dy).$$ Obviously the contact structure $\xi$ is real with respect to $s_1$ and $s_2$. However it is not obvious at all whether $\xi$ is real with respect to $s_3$ and $s_4$. Below we also answer that question for $s_4$ affirmatively using equivariant contact surgery.

The unique tight contact structure $\xi$ of $\s$ is obtained from $(S^3,\xi_\st)$ through contact $(+1)$-surgery on the unknot with Thurston-Bennequin number $\tb=-1$ \cite{DiGeSt} (see Figure~\ref{s1xs2_12}, left). 
\begin{figure}[h]
\centering
\resizebox{14cm}{!}
{\input{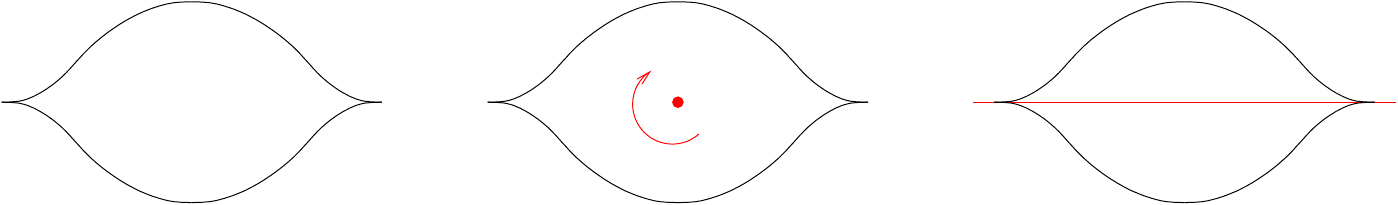_t}}
	\caption{The contact surgery diagram (left) for the unique tight contact structure on $\s$ can be made $s_1$-real (middle) and $s_2$-real (right)}
	\label{s1xs2_12}
\end{figure}
This contact surgery diagram can be readily made equivariant in two ways as depicted  in Figure~\ref{s1xs2_12}, middle and  right. Now we argue that these correspond to the equivariant contact surgery descriptions for $(\s,s_1,\xi)$ and  $(\s,s_2,\xi)$ respectively.

We have already observed that $(\s,s_1)$ is given  by an equivariant $(-1)$-surgery (with respect to $H$) of type~$4_2$ on the unknot represented by $a+b$.  
We consider a real Heegaard splitting $(H,\cst)$ of $(S^3,\cst,\xi_{\st})$; here, thanks to Lemma~\ref{cxTorusinS3}(a), we may assume that   $H$ is an equivariant convex torus with two parallel dividing curves, each represented by $a-b$. 
Here, the  Real Legendrian Realization Principle Lemma~\ref{lerp} cannot be used to get  $a+b$  equivariantly Legendrian on $H$. Instead we employ Lemma~\ref{cxTorusinS3}(b) to accomplish that.
Since the contact twisting along $a+b$ with respect to $H$ is $\tw(a+b;H)=-2$,  
the contact surgery coefficient is $+1$. Moreover, $\tb(a+b)=-1$.
Hence we get the corresponding equivariant contact surgery diagram, the one in the middle in Figure \ref{s1xs2_12}.	

Similarly $(\s,s_2)$ is given by an equivariant $(+1)$-surgery of type~$1_1$  (with respect to $H$) on the unknot represented by $a-b$.  In this the curve $a-b$ can be made  equivariantly Legendrian on $H$ according to Lemma~\ref{lerp}. 
Then  $\tw(a-b;H)=0$ as $(a-b)$ is parallel to the dividing set. Hence we get the 
contact surgery coefficient $+1-0=+1$. Since  $\tb(a-b)=-1$, we obtain the  equivariant contact surgery diagram in Figure \ref{s1xs2_12}, right.

$(\s,s_4)$ is obtained  by performing a type~5 $(+1)$-surgery on the knot pair formed by the curves $(a,b)$.
We make $a$ Legendrian (no need to employ Real Legendrian Realization Principle), and an equivariant copy represents a Legendrian unknot $b$. 
If these Legendrian unknots are arranged to have $\tw=\tb=-1$ then we would need an equivariant pair of contact $(+2)$-surgeries.
However such a contact surgery is not well-defined; a priori it is not unique. However in this case it is: either of the two choices made  
during the surgery leads to isomorphic structures  (see \cite[Example~5.3]{Ke}). Anyhow this topological equivariant  surgery must be turned into a well-defined 
equivariant contact one.  For this we follow the idea in \cite{Ke} to turn the contact $(+2)$-surgery along the Legendrian knot with $\tb=-1$ into 
contact $(\pm 1)$-surgeries along a pair of Legendrian knots $K_1, K_2$ and their equivariant copy $L_1, L_2$  as appears in Figure \ref{hopf1}(a).
Moreover, this contact diagram gives the unique tight structure on $\s$. 

\begin{figure}[h]
\centering
\resizebox{14cm}{!}
{\input{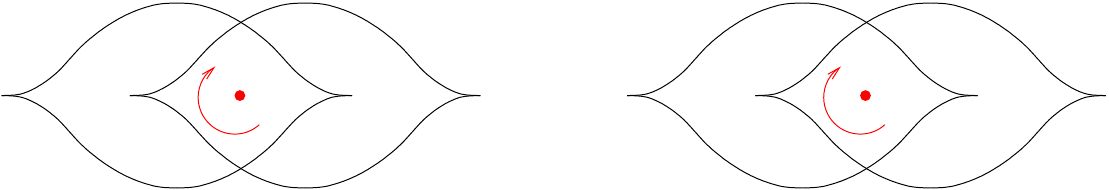_t}}
\caption{Equivariant contact surgery diagrams (a) for $(\s,s_4,\xi)$ and  (b) for  $(\s,s_3,OT)$.}
	\label{hopf1}
\end{figure}

A proof of tightness is shown in Figure \ref{hopf1}.
The first isomorphism is by contact sliding of $K_1,L_2$ over $K_2$ \cite{DiGe}. In the new diagram $L_1$ does not link with the slided copies of $K_1,L_2$ and thus it is a
meridian of $K_2$. The second isomorphism follows from the fact that a $(-1)$-contact surgery on some $L$ and a $(+1)$-contact surgery on its meridian cancels each other.
One way to see that is by sliding the meridian over $L$ to get a Legendrian push-off of $L$. A  $(-1)$-contact surgery on $L$ and a $(+1)$ on its push-off cancel.
The third isomorphism is via applying \cite[Lemma~2.9]{EtKeOn} (for two knots and for $n=+1$). This lemma is a generalization of \cite[Proposition~2.4]{LiSt}.
Finally the last isomorphism is a contact Reidemeister move.
The discussion up to now proves Theorem~\ref{3ofs1xs2}.

Furthermore $(\s,s_3)$ is given by a type~5 $(-1)$-surgery on the same Hopf link as above and a possible corresponding equivariant contact surgery diagram is shown in Figure \ref{hopf1}(b). This $s_3$-real contact structure is overtwisted (see e.g. \cite{DiGeSt}). So the attempt to find out whether $\xi$ is $s_3$-real or not is inconclusive with this particular surgery. We still do not know whether $\xi$ is $s_3$-real or not.

\begin{remark}
\label{yok}
Let us start with the genus-1 real contact Heegaard decompositions  of  $(S^3,\cst)$ with the gluing map   $f=\tau_b s'_2$ instead of $\cst$, where $s'_2=\begin{bmatrix} 1 & 0\\0 &-1\end{bmatrix}$. This decomposition is induced from the real open book depicted in Figure~\ref{g1}, thus the Heegaard surface $H$ is equivariantly convex as it comes from a real open book.
Since $s'_2=\tau^{-1}_{b} f$,  $(\s,s_2)$ can be obtained from $(S^3,\cst)$ via an equivariant $(-1)$-surgery of type~$1_1$ on the unknot represented by $b$, with framing with respect to  $H$. 
We observe that $b$ can be realized in a Legendrian way on $H$ by the Legendrian Realization Principle but that cannot be done equivariantly. Indeed, first note that Lemma~\ref{lerp} does not apply here as its last assumption is not satisfied. If equivariant realization were possible, then  $\tw(b;H)$ would be zero as $b$ is parallel to the dividing set. Hence we would get $\tb(b)=0+1=1$. This is of course impossible for an unknot in the tight $S^3$.
\end{remark}

\subsection{Lens spaces -- real structures of type~A}
\label{lenstypeA}

As in the previous paragraph let us consider an $h$-equivariant Heegaard torus $T$ in the real lens space $(L(p,q),h)$, bounding solid tori $V$ and $V'$. If $h$ is orientation preserving on $T$  and if $h|_{V}=c_1$ and $h|_{V'}=c_1$, then $h$ is said to be of type~$A$. It is interesting that for any $p,q$, $L(p,q)$ admits a real structure of type~$A$, unique up to equivariant isotopy \cite{HoRu}. Moreover we claim and prove here Theorem~\ref{A-real} which asserts that any tight contact structure on $L(p, q)$ is $A$-real.
 
Let $p>q>0$ and $ [r_1,\ldots ,r_n]$,  $r_i \leq-2$ be the continued fraction for $-p/q$.  K.~Honda proved that on $L(p,q)$, there exists exactly $|(r_1 + 1) \ldots (r_k + 1)| $ tight contact structures up to isotopy. Moreover, each tight contact structure on $L(p, q)$ can be obtained from Legendrian ($(-1)$-contact) surgery on a link in the standard tight 3-sphere \cite{Ho}. Topologically, surgery link is given by a chain of unknots with (ordered)  coefficients $r_i$ as in  Figure \ref{chain}. This surgery diagram in fact is an equivariant surgery diagram where all the unknots are $c_1$-knots. Performing equivariant surgery produces $(L(p,q),A)$. Indeed, the quotient of the surgered manifold with the real structure is  $S^3$. Since the only real structure on $L(p,q)$ having quotient $S^3$ is $A$ \cite{HoRu}, the real structure on the surgered manifold is of type~$A$.
\begin{figure}[h]%
	\centering
\input{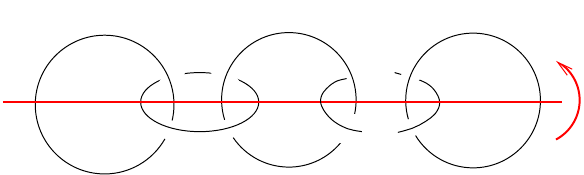_t}
\caption{A surgery diagram for $L(p,q)$, which may  be taken equivariant.}
	\label{chain}
\end{figure}

There are exactly $|(r_i + 1)|$ ways to stabilize an unknot to perform a contact $(-1)$-surgery on it. Moreover, each choice of stabilization  will represent a Legendrian $c_1$-unknot. Hence, each contact surgery diagram describing a tight contact structure on $L(p,q )$ is also an equivariant contact surgery diagram (consisting of Legendrian $c_1$-knots and contact $(-1)$-surgeries of type~$1_1$) describing an $A$-real tight contact structure. 

This argument proves Theorem~\ref{A-real}.

Finally note that the  equivariant contact surgery diagram we mentioned in the previous paragraph cannot be obtained by the construction in the proof of Theorem~\ref{elw} since the latter does not involve any $c_1$-knots.

\end{document}